\newcommand{\de}{\partial}
\newcommand{\ddbar}{\sqrt{-1} \partial \overline{\partial}}
\newcommand{\Ric}{\mathrm{Ric}}
\newcommand{\ov}[1]{\overline{#1}}
\newcommand{\tr}[2]{\textrm{tr}_{#1}{#2}}
\newcommand{\ti}[1]{\tilde{#1}}
\newcommand{\vp}{\varphi}
\newcommand{\ve}{\varepsilon}
\renewcommand{\leq}{\leqslant}
\renewcommand{\geq}{\geqslant}
\renewcommand{\le}{\leqslant}
\renewcommand{\ge}{\geqslant}
\newcommand{\be}{\begin{equation}}
\newcommand{\ee}{\end{equation}}
\newcommand{\fvebe}{f_{\ve,\beta}}
\newcommand{\tifvebe}{\tilde{f}_{\ve,\beta}}
\begin{document}
\newtheorem{claim}{Claim}
\newtheorem{theorem}{Theorem}[section]
\newtheorem{lemma}[theorem]{Lemma}
\newtheorem{corollary}[theorem]{Corollary}
\newtheorem{proposition}[theorem]{Proposition}
\newtheorem{question}{question}[section]
\newtheorem{defn}{Definition}[theorem]
\theoremstyle{definition}
\newtheorem{remark}[theorem]{Remark}

\numberwithin{equation}{section}

\title[Degenerate complex Monge-Amp\`ere equations]{$C^{1,1}$ regularity for degenerate complex Monge-Amp\`ere equations and geodesic rays}
\author[J. Chu]{Jianchun Chu}
\address{Institute of Mathematics, Academy of Mathematics and Systems Science, Chinese Academy of Sciences, Beijing 100190, P. R. China}
\author[V. Tosatti]{Valentino Tosatti}
\address{Department of Mathematics, Northwestern University, 2033 Sheridan Road, Evanston, IL 60208}
\author[B. Weinkove]{Ben Weinkove}
\address{Department of Mathematics, Northwestern University, 2033 Sheridan Road, Evanston, IL 60208}

\begin{abstract}We prove a $C^{1,1}$ estimate for solutions of complex Monge-Amp\`ere equations on compact K\"ahler manifolds with possibly nonempty boundary, in a degenerate cohomology class. This strengthens previous estimates of Phong-Sturm.  As applications we deduce the local $C^{1,1}$ regularity of geodesic rays in the space of K\"ahler metrics associated to a test configuration, as well as the local $C^{1,1}$ regularity of quasi-psh envelopes in nef and big classes away from the non-K\"ahler locus.
\end{abstract}
\maketitle

\section{Introduction}

Let $(M^n, \omega)$ be a compact K\"ahler manifold with nonempty smooth boundary $\partial M$.
In \cite{CTW0, CTW}, the authors considered a smooth solution $\varphi \in C^{\infty}(M, \mathbb{R})$ of the complex Monge-Amp\`ere equation
\begin{equation} \label{ori}
(\omega + \ddbar \varphi)^n = e^F \omega^n, \quad \omega+ \ddbar \varphi>0,
\end{equation}
and proved an interior \emph{a priori} estimate on the real Hessian $\nabla^2 \varphi$ which is independent of the infimum of the function $F$ (provided $F$ satisfies certain uniform bounds on its gradient and Hessian).  This established the existence of $C^{1,1}$ solutions of the \emph{homogeneous} complex Monge-Amp\`ere equation when $\partial M$ is weakly pseudoconcave, and settled the long standing problem of
$C^{1,1}$ regularity of Chen's weak geodesics \cite{Ch} in the space of K\"ahler potentials \cite{CTW}.  In \cite{CZ, To},  further extensions of these ideas were used to prove the $C^{1,1}$ regularity of envelopes in K\"ahler classes.

In this paper, we consider the case when the complex Monge-Amp\`ere equation is \emph{degenerate}, in the sense that the reference K\"ahler metric $\omega$ in (\ref{ori}) is replaced by a degenerate (not strictly positive) $(1,1)$ form.   This was investigated by Phong-Sturm \cite{PS3} who established $C^{1,\alpha}_{\textrm{loc}}$ estimates, for $0<\alpha<1$, under natural assumptions on $(M, \omega)$ which arise in the setting of  \emph{geodesic rays} in the space of K\"ahler metrics.  Degenerate complex Monge-Amp\`ere equations also appear in the consideration of envelopes in nef and big cohomology classes.  The main point of this paper is an improvement from $C^{1,\alpha}$  to $C^{1,1}$ regularity.

More precisely, our setup is as follows.  Let $M^n$ be a compact K\"ahler manifold with nonempty smooth boundary $\partial M$.
 Let $\omega_0$ be a smooth closed semipositive definite real $(1,1)$ form on $M$.  In addition, we assume there exists an effective divisor $E$ on $M$, disjoint from $\partial M$, together with a defining section $s \in H^0(M, \mathcal{O}(E))$ (where $\mathcal{O}(E)$ is the line bundle associated to $E$) and $h$ a Hermitian metric on $\mathcal{O}(E)$ with curvature form $R_h = - \ddbar \log h$ such that for  all sufficiently small $\delta>0$,
$$\omega_{\delta}:= \omega_0 - \delta R_h$$
is a K\"ahler form on $M$.

These hypotheses are satisfied for example when there is a modification $\mu:M\to N$ onto a compact K\"ahler $n$-manifold with boundary, where $\mu$ is given by composition of blowups with smooth centers which are compact complex submanifolds of the interior of $N$, and we take $\omega_0=\mu^*\omega_N$ for some K\"ahler metric $\omega_N$ on $N$, and $E=\mathrm{Exc}(\mu)$ is the exceptional locus of $\mu$.

Our first result is a $C^{1,1}$ regularity theorem for solutions of the homogeneous complex Monge-Amp\`ere equation with degenerate reference form $\omega_0$. It is an extension of our earlier result \cite[Corollary 1.3]{CTW} to our setting where $\omega_0$ is degenerate.

\begin{theorem} \label{mainthm}  With $M,E, \omega_0$ as above, assume in addition that $\partial M$ is weakly pseudoconcave. Let $\vp_0$ be a smooth function on $M$ with $\omega_0+\ddbar\vp_0\geq 0$, and let $\varphi$ be the unique bounded
function in $\emph{PSH}(M, \omega_0)$ solving the homogeneous complex Monge-Amp\`ere equation
\begin{equation} \label{HCMA}
(\omega_0 + \ddbar \varphi)^n = 0, \quad \textrm{on } M, \quad \varphi= \vp_0, \ \textrm{on } \partial M,
\end{equation}
in the sense of pluripotential theory.  Then $\varphi$ lies in   $C^{1,1}_{\emph{loc}}(M\setminus E)$.
\end{theorem}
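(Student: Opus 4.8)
The plan is to approximate the degenerate equation \eqref{HCMA} by a family of non-degenerate equations on $M$ using the Kähler forms $\omega_\delta = \omega_0 - \delta R_h$, obtain uniform local $C^{1,1}$ estimates away from $E$ via the machinery of \cite{CTW0, CTW}, and pass to the limit. More precisely, for each small $\delta > 0$ and each $\varepsilon > 0$, I would solve the non-degenerate Dirichlet problem $(\omega_\delta + \ddbar \varphi_{\delta,\varepsilon})^n = \varepsilon^n e^{F_\delta} \omega_\delta^n$ on $M$ with boundary data $\varphi_{\delta,\varepsilon} = \varphi_0$ on $\partial M$, for a suitable smooth right-hand side $F_\delta$ chosen so that the total mass matches and the solutions are uniformly bounded. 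The existence of such smooth solutions follows from the classical work of Caffarelli-Kohn-Nirenberg-Spruck / Guan on the Dirichlet problem (the boundary data extends to a strictly plurisubharmonic subsolution since $\omega_\delta$ is Kähler near $\partial M$ and $\partial M$ is — after possibly shrinking — still of the right type, or one uses the pseudoconcavity only in the final interior estimate).

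**Next I would** establish the key uniform estimates. The $C^0$ estimate: the $\varphi_{\delta,\varepsilon}$ are uniformly bounded in $L^\infty(M)$, independently of $\delta$ and $\varepsilon$, by a standard comparison/Kolodziej-type argument using that all $\omega_\delta$ dominate a fixed small Kähler form and the masses are controlled. The gradient estimate and the real Hessian estimate: here is where the singularity along $E$ enters. Following the approach of \cite{PS3} and \cite{To}, I would work on a fixed compact subset $K \Subset M \setminus E$ and use a barrier/cutoff built from $\log|s|_h^2$, which is bounded below away from $E$ and whose $\ddbar$ is controlled by $\delta^{-1}(\omega_\delta - \omega_0) \le \delta^{-1}\omega_\delta$. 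The crucial point is that the interior Hessian estimate of \cite[Corollary 1.3]{CTW} applies on such $K$ with the reference metric $\omega_\delta$: one obtains $\nabla^2 \varphi_{\delta,\varepsilon} \le C_K$ on $K$, where $C_K$ depends on $K$, on $\sup_K |s|_h^{-2}$, and on the geometry of $\omega_\delta$ restricted to a neighborhood of $K$ — but $\omega_\delta \to \omega_0$ smoothly on $K$, so these geometric quantities are uniformly controlled for $\delta$ small. One must check that the hypotheses of the \cite{CTW} estimate (in particular the gradient and Hessian bounds on $F = \log(\varepsilon^n e^{F_\delta} \omega_\delta^n / \omega_\delta^n)$) hold uniformly; since $F$ is essentially constant (up to the choice of $F_\delta$, which can be taken with controlled derivatives on $K$), this is the content that must be verified carefully. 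The weak pseudoconcavity of $\partial M$ is used exactly as in \cite{CTW} to handle the maximum-principle argument: since the Hessian estimate is interior, the boundary enters only to rule out that the maximum of the test quantity occurs on $\partial M$.

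**The main obstacle** I anticipate is making the interior $C^{1,1}$ estimate of \cite{CTW} genuinely uniform as $\delta \to 0$: the constant $C_K$ in that estimate a priori depends on a lower bound for the reference Kähler form, which degenerates. The resolution is that one only needs the reference form to be uniformly Kähler on a fixed neighborhood of $K$ inside $M \setminus E$, and one absorbs the degeneration of $\omega_\delta$ near $E$ into the barrier term $A \log|s|_h^2$ (with $A$ chosen so that $A \ddbar \log|s|_h^{-2} + \omega_0 \ge 0$ on $M$, which is possible by the defining property of $E$ and $h$), paying a price only in terms of $\sup_K |s|_h^{-2}$. This is precisely the degenerate-class modification already implemented by Phong-Sturm \cite{PS3} for the $C^{1,\alpha}$ estimate and adapted to envelopes in \cite{To}; the new input is simply that \cite{CTW} upgrades the interior second-order bound from $C^{1,\alpha}$ to $C^{1,1}$, and this upgrade survives the barrier argument because the barrier is smooth on $M \setminus E$.

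**Finally** I would pass to the limit: by the uniform $L^\infty$ and local $C^{1,1}$ bounds, a subsequence $\varphi_{\delta,\varepsilon}$ converges (weakly in $C^{1,1}_{\mathrm{loc}}(M \setminus E)$, strongly in $C^1_{\mathrm{loc}}$) to some $\psi \in C^{1,1}_{\mathrm{loc}}(M \setminus E) \cap L^\infty(M)$, along an appropriate diagonal sequence with first $\varepsilon \to 0$ then $\delta \to 0$. Standard pluripotential theory (stability of Monge-Ampère measures under such convergence, continuity of boundary values) shows $\psi \in \mathrm{PSH}(M, \omega_0)$, $(\omega_0 + \ddbar \psi)^n = 0$, and $\psi = \varphi_0$ on $\partial M$. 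By the uniqueness statement in the hypothesis of the theorem, $\psi = \varphi$, so $\varphi \in C^{1,1}_{\mathrm{loc}}(M \setminus E)$, as desired.
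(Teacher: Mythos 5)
Your overall scheme (approximate by non-degenerate Dirichlet problems with reference forms of the type $\omega_0-\delta R_h$, prove estimates that blow up like a power of $|s|_h^{-2}$, pass to the limit) is indeed the one the paper follows, but the central step of your argument is asserted rather than proved, and the specific mechanism you propose would fail. You invoke \cite[Corollary 1.3]{CTW} as if it were a local interior estimate on a compact set $K\Subset M\setminus E$, with constants depending only on $\sup_K|s|_h^{-2}$ and the geometry of the reference form near $K$. It is not: the estimate of \cite{CTW} is a global maximum-principle estimate on all of $M$, whose constants require the reference form to be a genuine K\"ahler metric on the whole manifold (with a uniform positive lower bound) together with uniform boundary Hessian bounds; there is no localization of it to $K$, and the homogeneous complex Monge--Amp\`ere equation has no purely local $C^{1,1}$ theory of this kind. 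Your uniformity claim ``$\omega_\delta\to\omega_0$ smoothly on $K$, so the geometric quantities are uniformly controlled'' also fails in the stated generality, because $\omega_0$ is only assumed semipositive and need not be K\"ahler anywhere on $M\setminus E$, so the constants in any CTW-type application degenerate as $\delta\to 0$ even away from $E$. Saying that the degeneration is ``absorbed into the barrier $A\log|s|_h^2$'' and that the upgrade from the Phong--Sturm $C^{1,\alpha}$ bounds to $C^{1,1}$ ``survives the barrier argument'' is exactly the assertion that needs proof: it is the content of the paper's Theorem \ref{thmap}, not a formal consequence of \cite{CTW} plus \cite{PS3}. (Two smaller slips: your positivity requirement $\omega_0+A\ddbar\log|s|_h^{-2}\geq0$ has the wrong sign, since the hypothesis gives $\omega_0+\delta\ddbar\log|s|_h^2>0$ away from $E$; and with unscaled boundary data $\vp_0$ you have no subsolution, since $\vp_0$ need not even be $\omega_\delta$-psh.)

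What the paper actually does at this point is: fix $\delta$ once and for all so that $\omega:=\omega_0-\delta R_h$ is K\"ahler, solve \eqref{Dp} with reference form $\omega^{(\gamma)}=\omega_0-\gamma\delta R_h$ and boundary data $(1-\gamma)\vp_0$ (the scaling makes $(1-\gamma)\vp_0$ a subsolution), substitute $\varphi_{\gamma,\delta}=\varphi_\gamma-(1-\gamma)\delta\log|s|^2_h$ so that $\omega^{(\gamma)}+\ddbar\varphi_\gamma=\omega+\ddbar\varphi_{\gamma,\delta}$ on $M\setminus E$ and all estimates are taken with respect to the fixed K\"ahler metric $\omega$, and then prove the new global weighted estimate $|\nabla^2\varphi_\gamma|_g\leq C|s|_h^{-2B}$ with $B,C$ independent of $\gamma$ and of $\inf_M F$, by applying the maximum principle to $\log\lambda_1(\nabla^2\varphi_\gamma)+\rho(|s|_h^{2B}|\partial\varphi_\gamma|^2_g)-A\varphi_{\gamma,\delta}$ and using the Phong--Sturm weighted gradient and Laplacian bounds \eqref{PSbounds} as input. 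The genuinely new difficulties occur exactly where your sketch is silent: error terms of size $|\partial\varphi_\gamma|_g\leq C|s|_h^{-B}$ which are no longer bounded, the weaker trace bound \eqref{db} which forces the case distinction $\lambda_1\geq C|s|_h^{-4B}$ in the analogue of \cite[Lemma 2.2]{CTW}, and the point-dependent choice $\ve=|s|_h^{2B}(x_0)/4A^2$. Also note that the weak pseudoconcavity of $\partial M$ is used to obtain the uniform boundary estimates up to second order (via \cite{Bo,PS3}), which are hypotheses of that global estimate, not merely to rule out a boundary maximum. Until you supply this weighted global Hessian estimate, the proposal has a genuine gap at its core.
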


The existence of a bounded $\omega_0$-plurisubharmonic $\varphi$ solving (\ref{HCMA}) follows easily from an envelope construction (see section \ref{sectiondeg}) and doesn't even require any pseudoconcavity assumption on the boundary of $M$. Phong-Sturm \cite{PS3} proved in addition that $\varphi \in C^{1, \alpha}_{\rm loc}(M \setminus E)$ for every $0<\alpha<1$, when $\de M$ is Levi-flat (and it was observed in \cite{Bo} that the argument extends to the weakly pseudoconcave case).  Our result, which makes use of the Phong-Sturm estimates, improves this regularity to $C^{1,1}_{\rm loc}(M\backslash E)$, which is optimal by the toric examples in \cite{Be3, CTa, SZ}.

As an application of Theorem \ref{mainthm} we obtain the optimal regularity result for geodesic rays constructed from test configurations.  We prove our result in the general setting of relative K\"ahler test configurations (introduced by Sj\"ostr\"om Dyrefelt \cite{Dy} and Dervan-Ross \cite{DR} independently) which contains the usual projective test configurations of Donaldson \cite{D} as a special case.  Our result, expressed in the  terminology of \cite{Dy}, is as follows:

\begin{theorem}\label{thmgeo}
Let $(X,\omega)$ be a compact K\"ahler manifold without boundary, $(\mathfrak{X},[\Omega])$ a cohomological relatively K\"ahler test configuration for $(X,[\omega])$. Given any smooth K\"ahler potential $\vp_0$ for $\omega$, let $(\vp_t)_{t\geq 0}$ be the weak geodesic ray associated to $(\mathfrak{X},[\Omega])$ emanating from $\vp_0$ (constructed in Section \ref{sectgeo}), and $\Phi$ the corresponding solution of the homogeneous complex Monge-Amp\`ere equation on $X\times \Delta^*$, for $\Delta^*$ a punctured disc in $\mathbb{C}$. Then $\Phi\in C^{1,1}_{\rm loc}(X\times\Delta^*)$.
\end{theorem}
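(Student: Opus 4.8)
The plan is to reduce Theorem~\ref{thmgeo} to Theorem~\ref{mainthm} by transplanting the weak geodesic ray onto the total space of the test configuration. Write $\pi\colon\mathfrak{X}\to\mathbb{P}^1$ for the test configuration, with its $\mathbb{C}^*$-action covering the standard one on $\mathbb{P}^1$; over $\mathbb{C}^*$ the map $\pi$ is a proper holomorphic submersion and the action gives an equivariant biholomorphism $\Psi\colon\mathfrak{X}|_{\mathbb{C}^*}\to X\times\mathbb{C}^*$ identifying $\pi$ with the projection. Fix the closed unit disc $\overline{\Delta}=\{|z|\leq1\}\subset\mathbb{C}\subset\mathbb{P}^1$ containing the central fibre. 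According to the construction of Section~\ref{sectgeo}, after replacing $\pi^{-1}(\overline{\Delta})$ by a suitable modification $\rho\colon M\to\pi^{-1}(\overline{\Delta})$ which is an isomorphism over $\mathbb{C}^*$ --- so that $M$ is a compact complex manifold with boundary $\de M\cong\pi^{-1}(\{|z|=1\})$, whose complex dimension is one more than that of $X$ --- one obtains a smooth closed semipositive $(1,1)$-form $\omega_0$ on $M$ and an effective divisor $E\subset\pi^{-1}(0)$ with the properties required before Theorem~\ref{mainthm} (in particular $\omega_0-\delta R_h$ is K\"ahler for all small $\delta>0$, for a defining section $s$ of $\mathcal{O}(E)$ and an appropriate metric $h$), and the weak geodesic ray $\Phi$, pulled back through $\Psi$ and $\rho$ and extended across $E$, agrees up to a smooth function on $M$ with the unique bounded $\omega_0$-plurisubharmonic solution $\hat\Phi$ of the homogeneous complex Monge--Amp\`ere equation on $M$ with smooth boundary data. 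Here the degeneracy of $\omega_0$ is forced by the \emph{relatively} (rather than globally) K\"ahler hypothesis together with the singularities of the total space that the modification $\rho$ resolves: over the contractible disc $\overline{\Delta}$ the class $[\Omega]$ becomes K\"ahler, and $\omega_0$ is a semipositive representative of its pullback to $M$, degenerate precisely along $E=\mathrm{Exc}(\rho)$, in the manner of the Remark following Theorem~\ref{mainthm}.

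Granting this, to conclude it suffices to prove $\hat\Phi\in C^{1,1}_{\rm loc}(M\setminus E)$, since $C^{1,1}_{\rm loc}$ regularity is preserved under biholomorphism and under adding smooth functions, and $M\setminus E$ contains $\pi^{-1}(\overline{\Delta}\setminus\{0\})\cong X\times(\overline{\Delta}\setminus\{0\})$. I would then check the remaining hypothesis of Theorem~\ref{mainthm}: the boundary $\de M$ is weakly pseudoconcave. Indeed $\de M$ is identified with $\pi^{-1}(\{|z|=1\})$, the preimage under the holomorphic submersion $\pi$ of a real hypersurface in $\mathbb{C}$; any real hypersurface in $\mathbb{C}$ has trivial complex tangent bundle and is therefore Levi-flat, and the preimage of a Levi-flat hypersurface under a holomorphic submersion is again Levi-flat, so $\de M$ is Levi-flat, in particular weakly pseudoconcave. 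Moreover $E\subset\pi^{-1}(0)$ is disjoint from $\de M=\pi^{-1}(\{|z|=1\})$, as required.

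Theorem~\ref{mainthm} then yields $\hat\Phi\in C^{1,1}_{\rm loc}(M\setminus E)$, up to and including $\de M$ by virtue of the weak pseudoconcavity, and transporting this back through $M|_{\overline{\Delta}\setminus\{0\}}\cong X\times(\overline{\Delta}\setminus\{0\})$ and discarding the smooth correction gives $\Phi\in C^{1,1}_{\rm loc}(X\times\Delta^*)$. I expect the real difficulty to lie in the first paragraph, i.e.\ in the construction of Section~\ref{sectgeo}: one must produce the smooth model $M$ and verify that the weak geodesic ray associated to a cohomological relatively K\"ahler test configuration is a \emph{bounded} solution of the homogeneous complex Monge--Amp\`ere equation there with a semipositive reference form $\omega_0$ whose degeneracy is confined to an exceptional divisor disjoint from the boundary --- precisely the data to which Theorem~\ref{mainthm} applies. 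Once that identification is in place, the rest (Levi-flatness of $\de M$, disjointness of $E$ from $\de M$, and the stability of $C^{1,1}_{\rm loc}$ under biholomorphism and smooth perturbation) is routine.
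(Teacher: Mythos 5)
Your proposal follows essentially the same route as the paper: pass to a smooth $\mathbb{C}^*$-equivariant modification of the test configuration over the closed disc, observe that the boundary is Levi-flat (hence weakly pseudoconcave) and that the exceptional divisor lies in the central fibre, away from the boundary, identify the geodesic ray with the bounded solution of the homogeneous Monge-Amp\`ere equation on the model up to a correction term, and invoke Theorem \ref{mainthm}. Two points you gloss over deserve attention, and the first is a genuine gap in your verification of the hypotheses. Theorem \ref{mainthm} requires not merely smooth boundary data, but a smooth function $\hat{\vp}_0$ defined on \emph{all} of the model $M$ with $\omega_0+\ddbar\hat{\vp}_0\geq 0$ whose restriction to $\de M$ is the prescribed boundary value; your checklist (Levi-flatness of $\de M$, $E\cap\de M=\emptyset$) omits this, and it is not automatic, since the natural boundary datum $\ti{\vp}_0=\rho^*p_1^*\vp_0+\psi_D-\ti{\pi}^*f$ is only given near the boundary. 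The paper produces the extension by adapting \cite[Proposition 7.10]{Bo}: transport $\ti{\vp}_0$ by a retraction of a collar onto the boundary circle, cut off, and add a large multiple of $\ti{\pi}^*(|z|^2-1)$. This extension is also what furnishes the subsolution needed for the approximating Dirichlet problems \eqref{Dp} behind Theorem \ref{mainthm}.

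Second, a smaller imprecision: the function relating the solution on the model to the geodesic potential is not smooth on all of $M$. By \cite[Proposition 3.8]{Dy} one has $[\mu^*\Omega]=\rho^*p_1^*[\omega]+[D]$ with $D$ an $\mathbb{R}$-divisor supported on the central fibre, and the correction $\psi_D-\ti{\pi}^*f$ has logarithmic poles along $\mathrm{supp}\,D$; it is smooth only on the complement of the central fibre. That is all one needs in order to transfer $C^{1,1}_{\rm loc}$ regularity from $M\setminus E$ to $X\times\Delta^*$, so the argument survives, but the claim that the geodesic ``agrees up to a smooth function on $M$'' with the solution on the model is not correct as written and should be restricted to $M$ minus the central fibre.
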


Also, using the results of \cite{Dy}, we obtain the asymptotic behavior of the Mabuchi energy along suitable approximations $\vp_{\gamma,t}$ of the ray $\vp_t$, see \eqref{asympt} below.
Our theorem builds on and improves the result of Phong-Sturm \cite{PS3} who established $C^{1,\alpha}_{\rm loc}$ regularity for $0<\alpha<1$ in the setting of algebraic test configurations.
For background material and further references on geodesic rays, their regularity and relation to test configuration and K-stability and we refer the reader to \cite{Be,Be2,BBJ,BHJ,BHJ2,CTa,Da,DH, DaR2,Dy,PS,PS2,PS3,PS4,RWN,RWN2,RWN3,RWN4,SZ}.

Next we prove a $C^{1,1}$ regularity result for envelopes in nef and big cohomology classes on K\"ahler manifolds.  More precisely, let
 $(M^n,g)$ be a compact K\"ahler manifold without boundary and $\alpha$ a closed real $(1,1)$ form such that $[\alpha]$ is nef and  $\int_M\alpha^n>0$ (this implies that $[\alpha]$ is big, i.e. it contains a K\"ahler current, thanks to a result of Demailly-P\u{a}un \cite{DP}).   We consider the envelope
\begin{equation}\label{env}
u(x)=\sup\{\vp(x)\ |\ \vp\in \textrm{PSH}(M,\alpha), \vp\leq 0\}.
\end{equation}
Recall that there is a proper Zariski closed subset $E_{nK}(\alpha)\subset M$, the non-K\"ahler locus of $[\alpha]$, so that we can find a K\"ahler current $T=\alpha+\ddbar\psi$ with analytic singularities along $E_{nK}(\alpha)$, such that $T\geq \delta\omega$ weakly on $M$, for some $\delta>0$ (and $E_{nK}(\alpha)$ is the smallest set with this property). See e.g. \cite{CT,Dem} for background on this.

Our result is:
\begin{theorem} \label{thmenv} Let $(M,\omega)$ be a compact K\"ahler manifold and $[\alpha]$ a nef and big $(1,1)$ class. Then the envelope $u$ defined by \eqref{env} satisfies
 $u\in C^{1,1}_{\rm loc}(M\backslash E_{nK}(\alpha))$, and we have
 \begin{equation}\label{volform}
 \int_M\alpha^n=\int_{\{u=0\}}\alpha^n.
 \end{equation}
\end{theorem}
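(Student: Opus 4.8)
The plan is to establish the local $C^{1,1}$ regularity of $u$ on $M\setminus E_{nK}(\alpha)$ first, and then to read off the mass identity \eqref{volform} from it by standard pluripotential theory; essentially all of the work is in the regularity.

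Granting the regularity, I would prove \eqref{volform} as follows. Since $[\alpha]$ is big, $u$ is a bounded $\alpha$-psh function, so $(\alpha+\ddbar u)^n$ is a well-defined Bedford--Taylor measure of total mass $\int_M\alpha^n$ which places no mass on the pluripolar set $E_{nK}(\alpha)$; likewise $\alpha^n$, being absolutely continuous, places no mass on the proper analytic subset $E_{nK}(\alpha)$. On the open set $\{u<0\}$ the envelope $u$ is maximal (a balayage argument), so $(\alpha+\ddbar u)^n=0$ there. On $M\setminus E_{nK}(\alpha)$ the regularity gives $\ddbar u\in L^\infty_{\rm loc}$ and $u$ twice differentiable almost everywhere, so there $(\alpha+\ddbar u)^n$ is the absolutely continuous measure whose density is the pointwise Monge--Amp\`ere determinant. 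At almost every point $x$ of $\{u=0\}\setminus E_{nK}(\alpha)$ --- namely a Lebesgue density point of $\{u=0\}$ at which $u$ is twice differentiable --- the function $u$ attains its maximum and agrees with $0$ on a set of density one, which forces $Du(x)=0$ and $D^2u(x)=0$, hence $\ddbar u(x)=0$ and $(\alpha+\ddbar u)^n(x)=\alpha^n(x)$. Integrating over $\{u=0\}$ gives $\int_{\{u=0\}}(\alpha+\ddbar u)^n=\int_{\{u=0\}}\alpha^n$, and since $(\alpha+\ddbar u)^n$ has total mass $\int_M\alpha^n$ and vanishes on $\{u<0\}$, we get $\int_{\{u=0\}}\alpha^n=\int_M\alpha^n$ (this identity is due to Berman).

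For the regularity I would first set up the degeneration. Fix a K\"ahler current $T=\alpha+\ddbar\psi\geq\delta_0\omega$ with analytic singularities exactly along $E_{nK}(\alpha)$, normalized so that $\psi\leq0$; thus $\psi$ is smooth on $M\setminus E_{nK}(\alpha)$ and $\to-\infty$ along $E_{nK}(\alpha)$. After a log-resolution $\mu:\widehat M\to M$ (a composition of blowups with smooth centers lying over $E_{nK}(\alpha)$, so that $\widehat M$ is compact K\"ahler and $\mu$ restricts to an isomorphism $\widehat M\setminus E\cong M\setminus E_{nK}(\alpha)$) one may assume $\mu^*\psi$ has analytic singularities along a simple normal crossings divisor $E$ with $\mathrm{Supp}\,E=\mathrm{Exc}(\mu)$, defining section $s$ and Hermitian metric $h$ of $\mathcal O(E)$, and by \cite{CT} the class $\mu^*[\alpha]-\delta[E]$ is K\"ahler for all small $\delta>0$. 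Since $\mu^*$ identifies $\textrm{PSH}(M,\alpha)$ with $\textrm{PSH}(\widehat M,\mu^*\alpha)$, the envelope of $0$ in $\mu^*[\alpha]$ equals $\mu^*u$, so it suffices to prove $\mu^*u\in C^{1,1}_{\rm loc}(\widehat M\setminus E)$. I would then approximate: fixing a K\"ahler form $\widehat\omega$ on $\widehat M$, the classes $\mu^*[\alpha]+\epsilon[\widehat\omega]$ are K\"ahler for $\epsilon>0$, so by \cite{CZ, To} their envelopes $u_\epsilon$ of $0$ lie in $C^{1,1}(\widehat M)$ and decrease to $\mu^*u$ as $\epsilon\to0$, and each $u_\epsilon$ is the limit as $\beta\to\infty$ of smooth solutions $u_{\epsilon,\beta}\leq0$ of a Berman-type equation $(\alpha_\epsilon+\ddbar u_{\epsilon,\beta})^n=e^{\beta u_{\epsilon,\beta}}\alpha_\epsilon^n$, with $\alpha_\epsilon$ a K\"ahler form in $\mu^*[\alpha]+\epsilon[\widehat\omega]$. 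Uniform $C^0$ bounds follow from bigness, and uniform $C^{1,\alpha}_{\rm loc}(\widehat M\setminus E)$ bounds from adapting the estimates of Phong--Sturm \cite{PS3}; the theorem then reduces to a $C^{1,1}$ bound for $u_{\epsilon,\beta}$ on compact subsets of $\widehat M\setminus E$ that is uniform in both $\epsilon$ and $\beta$.

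Producing that uniform Hessian bound is the main obstacle and the technical heart of the argument. For each fixed $\epsilon>0$ the reference form $\alpha_\epsilon$ is K\"ahler and one is in the situation of the envelope estimates of \cite{CZ, To}, which rest on the maximum principle argument of \cite{CTW}; the trouble is that the constant there degenerates as $\epsilon\to0$, because $\alpha_\epsilon$ collapses transversally along $E$. To keep it uniform I would run the maximum principle on a test quantity built from the largest eigenvalue of $\ddbar u_{\epsilon,\beta}$ together with lower-order terms in $|\nabla u_{\epsilon,\beta}|$ and $u_{\epsilon,\beta}$, penalized by a multiple of $\log\|s\|_h^2$, so that the maximum is forced onto a fixed compact subset of $\widehat M\setminus E$, on which the pulled-back current $\mu^*T$ furnishes a uniformly positive metric; the extra third-order and curvature terms produced by the penalization are absorbed using the positivity of $\mu^*T$, exactly as in the proof of Theorem~\ref{mainthm} and in \cite{PS3}. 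Two structural features make the estimate close with constants independent of $\epsilon$ and $\beta$: the term arising from $\ddbar(\beta u_{\epsilon,\beta})$ in the twice-differentiated equation has a favorable sign, since $u_{\epsilon,\beta}\leq0$ forces $\textrm{tr}_{g_{\epsilon,\beta}}\alpha_\epsilon\geq n$ (where $g_{\epsilon,\beta}$ is the metric of $\alpha_\epsilon+\ddbar u_{\epsilon,\beta}$), so it may simply be discarded, as in \cite{To}; and the remaining right-hand-side contributions are harmless because the estimate of \cite{CTW} is insensitive to the infimum of the right-hand side, so the factor $\beta$ never enters the final bound. Once this uniform bound is in hand, letting $\beta\to\infty$ and then $\epsilon\to0$ --- the monotone convergences being routine given bigness and pluripotential compactness --- yields $\mu^*u\in C^{1,1}_{\rm loc}(\widehat M\setminus E)$, hence $u\in C^{1,1}_{\rm loc}(M\setminus E_{nK}(\alpha))$, completing the proof.
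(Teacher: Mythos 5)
Your proposal reaches the right conclusion and shares the paper's overall strategy (Berman's $\beta$-approximation plus a maximum-principle Hessian bound that degenerates controllably near the non-K\"ahler locus), but it takes a structurally different route and leaves a genuine gap in the first-order estimates. Structurally, you pass to a log-resolution $\mu:\widehat{M}\to M$ and use $\log|s|_h^2$ as the penalizing weight; the paper works entirely on $M$, taking as weight the singular potential $\psi$ of the K\"ahler current $T=\alpha+\ddbar\psi$ with analytic singularities along $E_{nK}(\alpha)$. Because $\psi$ has analytic singularities, $e^{\psi/\gamma}$ is globally smooth and one has $|\partial\psi|^2_g\le Ce^{-C_1\psi}$ on $M\setminus E_{nK}(\alpha)$ (the paper's \eqref{psi}), which plays the exact role of the smoothness of $|s|_h^2$ in Theorem~\ref{mainthm}. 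This keeps Lemmas~\ref{le} and~\ref{le2} entirely on $M$ and avoids the auxiliary resolution, the identification of $\mu^*\textrm{PSH}$, and the appeal to $\mu^*[\alpha]-\delta[E]$ being K\"ahler that your approach would require; the reference form $\alpha+\ve\omega$ in the paper need not even be positive.

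The genuine gap is in your treatment of the $C^0$ and $C^1$ bounds, which you attribute to ``adapting the estimates of Phong--Sturm \cite{PS3}.'' The Phong--Sturm bounds \eqref{PSbounds}, and Theorem~\ref{thmap} built on them, carry constants depending on $\sup_M|\partial F|_g$ and a lower Hessian bound for $F$; if you set $F=\beta u_{\ve,\beta}$, these constants blow up with $\beta$, so the estimates cannot be applied off the shelf and ``adapting'' them is precisely the nontrivial point. The paper instead proves fresh $C^0$, Laplacian, and gradient bounds in Lemma~\ref{le}, exploiting the good sign of the $\beta$-term at each order: $\Ric(\ti{\omega})=\Ric(\omega)-\beta\ti{\omega}+\beta(\alpha+\ve\omega)$ gives a $+\beta$ contribution to the Chern--Lu inequality, and differentiating the equation produces $+2\beta e^h|\partial f|^2_g$ in the gradient estimate. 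You notice the favorable sign only at second order, but without the first-order version the scheme does not close, and the paper explicitly remarks that the simultaneous presence of the degenerate class and the parameter $\beta$ makes the gradient estimate genuinely different from \cite{PS3,Bl2,DaR}. A smaller inaccuracy: $u$ is only known to satisfy $u\ge\psi-C$, so it need not be bounded across $E_{nK}(\alpha)$; your opening claim that $(\alpha+\ddbar u)^n$ is a globally defined Bedford--Taylor measure of total mass $\int_M\alpha^n$ should be replaced by the non-pluripolar product $\langle(\alpha+\ddbar u)^n\rangle$ of \cite{BEGZ}, as the paper does when proving~\eqref{volform}.
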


Berman \cite[Theorem 1.1]{Be} and Berman-Demailly \cite[Theorem 1.4]{BD} proved $C^{1,\gamma}_{\rm loc}$ regularity on $M \setminus E_{nK}(\alpha)$ for all $0<\gamma <1$, and earlier Berman \cite{Be3} proved Theorem \ref{thmenv} when $[\alpha]=c_1(L)$ for some big holomorphic line bundle $L$ (not necessarily nef) on a projective manifold $X$.  Theorem \ref{thmenv} was recently obtained in \cite{CZ, To} in the case when $[\alpha]$ is a K\"ahler class, in which case one obtains $u \in C^{1,1}(M)$. The equality \eqref{volform} was proved for general big $(1,1)$ classes in \cite[Theorem 1.4]{BD} (with the LHS replaced by $\mathrm{Vol}(\alpha)$ when $[\alpha]$ is not nef), with a new proof of the inequality ``$\leq$'' in \cite{Be}. Here (as in \cite{To}) we just remark that once we know $C^{1,1}$ regularity of $u$ on compact sets away from $E_{nK}(\alpha)$, then the proof of \eqref{volform} is quite easy.
Lastly, we remark that the proof of Theorem \ref{thmenv} also shows that the conclusion of the Main Theorem 1.2 of \cite{Be} is now improved to $C^{1,1}_{\rm loc}$ on the complement of the non-K\"ahler locus.\\

\noindent{\bf Acknowledgments. }The authors thank S. Boucksom and Z. Sj\"ostr\"om Dyrefelt for discussions about geodesic rays. The first-named author would like to thank his advisor G. Tian for encouragement and support. The second-named author was partially supported by NSF grant DMS-1610278, and the third-named author by NSF grant DMS-1406164.

\section{Proof of Theorem \ref{mainthm}} \label{sectiondeg}

Fix once and for all a small constant $\delta>0$ with $\omega_{\delta} = \omega_0 - \delta R_h >0$.   For ease of notation we will drop the $\delta$ subscript, writing
$$\omega = \omega_{\delta}, \quad \textrm{with } \omega = \sqrt{-1} g_{i\ov{j}} dz^i \wedge d\ov{z}^j,$$ for  $g$ the associated K\"ahler metric.

It is well-known that the existence of a bounded  $\omega_0$-plurisubharmonic $\varphi$ solving (\ref{HCMA}) follows from the envelope construction
$$\vp(x)=\sup\{u(x)\ |\ u\in PSH(M,\omega_0), \limsup_{z\to z_0}u(z)\leq \vp_0(z_0) \textrm{ for all }z_0\in \de M\},$$
see e.g. \cite[Proposition 2.7]{Be2} or \cite{RWN4}.

To prove regularity of $\vp$ we begin with an approximation argument, following Phong-Sturm \cite{PS3}.   Then define for each $\gamma \in [0,1/2]$ a reference $(1,1)$ form
$$\omega^{(\gamma)} = (1- \gamma) \omega_0 + \gamma \omega=\omega_0-\gamma\delta R_h,$$
which is K\"ahler for $0 <\gamma \le 1/2$.
Now for $0<\gamma \le 1/2$, let $\varphi_{\gamma} \in C^{\infty}(M)$ solve the non-degenerate Dirichlet problem
\begin{equation} \label{Dp}
\begin{split}
\left( \omega^{(\gamma)} + \ddbar \varphi_{\gamma} \right)^n = {} & f_{\gamma} (\omega^{(\gamma)})^n, \quad
  \omega^{(\gamma)} + \ddbar \varphi_{\gamma}>  0, \\  \varphi_{\gamma}|_{\partial M} = {} & (1-\gamma)\vp_0,
 \end{split}
\end{equation}
where we define $$f_{\gamma} = \gamma^n \frac{\omega^n}{(\omega^{(\gamma)})^n}.$$
Indeed, the function $(1-\gamma)\vp_0$ is a subsolution for \eqref{Dp} since
$$ (\omega^{(\gamma)} + (1-\gamma)\ddbar\vp_0)^n\geq \gamma^n\omega^n=f_{\gamma} (\omega^{(\gamma)})^n,$$
using that $(1-\gamma)(\omega_0+\ddbar\vp_0)\geq 0$, and hence we can apply \cite[Theorem B]{Bo} (see also \cite[Theorem 1.3]{Bl})
 to see that (\ref{Dp}) has a smooth solution $\varphi_{\gamma}$.

We wish to prove uniform estimates for $\varphi_{\gamma}$ as $\gamma \rightarrow 0$ so that $\varphi_{\gamma}$ converges to the desired solution $\varphi$ in the statement of Theorem \ref{mainthm}.  We prove our estimates in a slightly more general setting, which gives an extension of our earlier result \cite[Theorem 1.2]{CTW} to our degenerate setting.

\begin{theorem} \label{thmap} With the notation above, let $\varphi_{\gamma} \in C^{\infty}(M)$ solve
\begin{equation} \label{ma}
\begin{split}
\left( \omega^{(\gamma)} + \ddbar \varphi_{\gamma} \right)^n = {} & e^{F} \omega^n, \quad
  \omega^{(\gamma)} + \ddbar \varphi_{\gamma}>  0,
 \end{split}
\end{equation}
for a smooth function $F=F_{\gamma}$ on $M$.

Then there exist constants $B, C$ depending only on $M$, $\omega$, $\omega_0$, $E$, $s$, $h$, $\delta$, upper bounds on $\sup_M |\varphi_{\gamma}| $, $\sup_{\partial M} |\de \varphi_{\gamma}|_g$, $\sup_{\partial M} |\nabla^2 \varphi_{\gamma}|_g$, $\sup_M F$, $\sup_M |\de F|_g$ and on a lower bound of $\nabla^2 F$ with respect to $g$ such that on $M\backslash E$ we have
\begin{equation} \label{eqap}
 |\nabla^2 \varphi_{\gamma}|_g \le \frac{C}{|s|_h^{2B}}.
\end{equation}
\end{theorem}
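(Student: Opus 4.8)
The plan is to prove \eqref{eqap} by a maximum-principle argument modeled on \cite{CTW} (in the sharp form of \cite[Theorem 1.2]{CTW}), with the essential new feature that the test function carries a weight built from the defining section $s$ of $E$. Write $\ti\omega:=\omega^{(\gamma)}+\ddbar\varphi_\gamma>0$ for the solution metric, $\ti g$ for its associated metric, so that \eqref{ma} reads $\det\ti g=e^{F}\det g$ with $g$ the metric of the fixed K\"ahler form $\omega=\omega_\delta$, and let $\ti L:=\ti g^{i\bar j}\nabla_i\nabla_{\bar j}$ be the linearized operator, $\nabla$ the Chern connection of $g$. I will use four facts throughout: \textbf{(a)} $0\le\omega_0\le C_0\,\omega$ on $M$ for a fixed $C_0$, since $\omega$ is a genuine K\"ahler metric; \textbf{(b)} the Poincar\'e--Lelong identity $\ddbar\log|s|_h^2=-R_h$ on $M\setminus E$, together with $-R_h=\tfrac1\delta(\omega-\omega_0)$, which follows from $\omega=\omega_0-\delta R_h$; \textbf{(c)} the pointwise bound $\tr{\ti g}{g}=\sum_i\ti g^{i\bar i}\ge n(\det g/\det\ti g)^{1/n}=n\,e^{-F/n}\ge c_0>0$ with $c_0=c_0(n,\sup_M F)$, which tames every bad term that a priori blows up as $\inf F\to-\infty$; and \textbf{(d)} since $\omega^{(\gamma)}>0$ on all of $M$ for $\gamma>0$, $\varphi_\gamma$ is smooth up to $\partial M$, so the largest eigenvalue $\lambda_1$ of $\nabla^2\varphi_\gamma$ (with respect to $g$) is a bounded continuous function on $M$, and $\nabla^k\omega^{(\gamma)}=(1-\gamma)\nabla^k\omega_0+\gamma\nabla^k\omega$ is bounded independently of $\gamma$.

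I would argue in two steps. First, a gradient estimate $|\partial\varphi_\gamma|_g^2\le C|s|_h^{-2\beta_0}$ on $M\setminus E$, by applying the maximum principle to $\log|\partial\varphi_\gamma|_g^2+\beta_0\log|s|_h^2-A\varphi_\gamma$; this is simpler than the second step but uses the same weighted combination. Second, the Hessian estimate: after the standard device that renders the top eigenvalue smooth at the maximum point (replacing $\lambda_1$ there by $(\nabla^2\varphi_\gamma)(\xi,\xi)$ for a fixed unit field $\xi$), consider
\[
Q:=\log\lambda_1(\nabla^2\varphi_\gamma)+B\log|s|_h^2+\phi\big(|s|_h^{2\beta_0}|\partial\varphi_\gamma|_g^2\big)-A\varphi_\gamma,
\]
with $\phi$ a bounded increasing auxiliary function as in \cite{CTW}, applied to the weighted gradient which is bounded by Step~1, and $B\ge\max(1,\beta_0)$, $A\ge 2B/\delta$ fixed large. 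Since $B\log|s|_h^2\to-\infty$ at $E$ while the other terms stay bounded there (by \textbf{(d)} and Step~1), $\sup_M Q$ is attained either on $\partial M$---where the bounds on $\sup_M|\varphi_\gamma|$, $\sup_{\partial M}|\partial\varphi_\gamma|_g$, $\sup_{\partial M}|\nabla^2\varphi_\gamma|_g$ and $|s|_h>0$ near $\partial M$ give $Q\le C$---or at an interior point $x_0\in M\setminus E$. In the latter case I would show $\lambda_1(x_0)\le C$ for a uniform constant $C$; then $Q(x_0)=\sup_M Q\le C$ as well (using $|s(x_0)|_h^2\le\sup_M|s|_h^2$ and the boundedness of $\phi$ and $|\varphi_\gamma|$), so $Q\le C$ on $M$, whence $\log\lambda_1+B\log|s|_h^2\le C$, i.e.\ $\lambda_1\le C|s|_h^{-2B}$ on $M\setminus E$; together with $\sum_i\lambda_i=\Delta_g\varphi_\gamma\ge-C$ (immediate from $\ti\omega>0$ and \textbf{(a)}) this controls all eigenvalues of $\nabla^2\varphi_\gamma$, giving \eqref{eqap}.

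The heart of the matter is the computation of $\ti L Q$ at $x_0$, where $\nabla Q=0$ and $\ti L Q\le0$. Inserting \cite{CTW}'s analysis of $\ti L\log\lambda_1$---obtained by differentiating $\log\det\ti g=F+\log\det g$ twice and commuting covariant derivatives---one produces a positive third-order square together with bad terms: the curvature commutators of the fixed metric $g$ (of size $-C\tr{\ti g}{g}$ after division by $\lambda_1$); the term $\ge-C\tr{\ti g}{g}$ from two derivatives of $\omega^{(\gamma)}$, bounded uniformly in $\gamma$ by \textbf{(d)}; terms controlled by $\sup_M|\partial F|_g$, the lower bound on $\nabla^2F$ and $\sup_M F$; the term $-|\nabla^{\ti g}\log\lambda_1|^2$; and, for the $(2,0)$-part of $\nabla^2\varphi_\gamma$, commutators involving $\nabla\varphi_\gamma$. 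As in \cite{CTW}, for $\lambda_1$ large the third-order square absorbs $-|\nabla^{\ti g}\log\lambda_1|^2$, and together with the term generated by $\phi$ (using $\nabla Q=0$, Cauchy--Schwarz, and the Step~1 bound on $|s|_h^{2\beta_0}|\partial\varphi_\gamma|_g^2$) it absorbs the $(2,0)$-commutators, leaving a good term proportional to $\lambda_1$. The key new ingredient is the joint contribution of the weight and $-A\varphi_\gamma$; by \textbf{(b)},
\begin{align*}
\ti L\big(B\log|s|_h^2-A\varphi_\gamma\big)
&=B\,\tr{\ti g}{(-R_h)}-A\big(n-\tr{\ti g}{\omega^{(\gamma)}}\big)\\
&=-An+\Big(\tfrac{B}{\delta}+\gamma A\Big)\tr{\ti g}{g}+\Big((1-\gamma)A-\tfrac{B}{\delta}\Big)\tr{\ti g}{\omega_0}\\
&\ge -An+\tfrac{B}{\delta}\,\tr{\ti g}{g},
\end{align*}
the last step using $A\ge2B/\delta$ (so the coefficient of $\tr{\ti g}{\omega_0}$ is $\ge0$) and $\tr{\ti g}{\omega_0}\ge0$ from $\omega_0\ge0$. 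Thus the weight supplies a good term $\tfrac B\delta\tr{\ti g}{g}$ whose coefficient is large (choose $B$ large) and, crucially, independent of $\gamma$; it replaces the term $A\,\tr{\ti g}{\omega^{(\gamma)}}\ge\gamma A\,\tr{\ti g}{g}$ that $-A\varphi_\gamma$ alone would give in the non-degenerate case of \cite{CTW} but which here degenerates as $\gamma\to0$. Choosing $B/\delta$ larger than all the accumulated bad constants and using $\tr{\ti g}{g}\ge c_0$ from \textbf{(c)}, one arrives at $0\ge\ti L Q(x_0)\ge\varepsilon\,\lambda_1(x_0)-C$ with $\varepsilon,C$ uniform, hence $\lambda_1(x_0)\le C$.

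I expect the main obstacle to be exactly this last bookkeeping. One must verify that, with the weight present, every bad term---the curvature commutators, the $\nabla\varphi_\gamma$-commutators from the $(2,0)$-part, and the terms produced by $\phi$ and by substituting $\nabla Q=0$ (which reintroduce $\nabla\log|s|_h^2$, singular near $E$)---is absorbed with constants uniform \emph{both} in $\gamma$ (handled by \textbf{(d)} and by the identity above) \emph{and} up to $E$ (handled by the gradient bound of Step~1 and the choices $B\ge\max(1,\beta_0)$, which keep the $|s|_h$-weighted gradients bounded and the relevant contractions of $\nabla\log|s|_h^2$ dominated by the third-order square). This is a technically delicate but conceptually direct extension of the third-order computation of \cite{CTW}; I note that weak pseudoconcavity of $\partial M$ plays no role here---it enters only in Theorem~\ref{mainthm}, to supply the boundary estimate $\sup_{\partial M}|\nabla^2\varphi_\gamma|_g$.
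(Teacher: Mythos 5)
Your approach is essentially the same as the paper's: a maximum-principle argument for $Q=\log\lambda_1+\rho(|s|_h^{2B}|\partial\varphi_\gamma|_g^2)-A\varphi_{\gamma,\delta}$ built on the third-order computation of \cite{CTW}, with the weight in $|s|_h$ supplying the extra good $\mathrm{tr}_{\tilde g}g$ term that replaces the degenerating one. Your separate term $B\log|s|_h^2$ is just the paper's device written differently: the paper folds the weight into $\varphi_{\gamma,\delta}=\varphi_\gamma-(1-\gamma)\delta\log|s|_h^2$, which is cleaner because then $\tilde\omega=\omega+\ddbar\varphi_{\gamma,\delta}$ with the fixed K\"ahler form $\omega=\omega_\delta$, and $-\Delta_{\tilde g}(A\varphi_{\gamma,\delta})=A(\mathrm{tr}_{\tilde g}g-n)$ in one step; your identity $\tilde L(B\log|s|_h^2-A\varphi_\gamma)\ge -An+\frac{B}{\delta}\mathrm{tr}_{\tilde g}g$ is algebraically the same fact. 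Similarly, your Step~1 rederives the gradient estimate that the paper imports from Phong--Sturm.

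There is, however, a genuine gap in your outline: you only establish the gradient bound $|\partial\varphi_\gamma|_g^2\le C|s|_h^{-2\beta_0}$, but the absorption of the $(1,1)$-part of the third-order terms (the analogue of \cite[(2.21)]{CTW}, carried out in Lemma~\ref{lemmauno}) also requires the Laplacian bound $\mathrm{tr}_\omega\tilde\omega\le C|s|_h^{-2B}$. Concretely, the Cauchy--Schwarz step $\bigl|\sum_q\bar\nu_q V_1(\tilde g_{i\bar q})\bigr|^2\le\bigl(\sum_q|\nu_q|^2\tilde g_{q\bar q}\bigr)\sum_q\tilde g^{q\bar q}|V_1(\tilde g_{i\bar q})|^2$ needs $\tilde g_{1\bar 1}\le\mathrm{tr}_\omega\tilde\omega$ to be controlled, and in the degenerate setting this is only $\le C|s|_h^{-2B}$; that extra factor is absorbed in the paper by the choice $\varepsilon=|s|_h^{2B}(x_0)/(4A^2)$ together with the dichotomy $\lambda_1\gtrless C/\varepsilon^2$. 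Without a Laplacian estimate (either proved as a ``Step~1.5'' by a similar weighted maximum principle, or cited from Phong--Sturm as the paper does), this absorption does not go through. Relatedly, the claim that you would show $\lambda_1(x_0)\le C$ is too strong: the good term carries the factor $|s|_h^{2\beta_0}$ coming from $\phi$, so what the computation actually yields at the maximum is $\lambda_1(x_0)\le C|s|_h^{-kB}(x_0)$ for some $k\ge 1$. That weaker conclusion still suffices provided the exponent on $\log|s|_h^2$ in $Q$ is chosen large enough to dominate $kB$ (the paper arranges $A\delta\ge 4B$ against $\lambda_1(x_0)\le C|s|_h^{-4B}$); in your setup this would require $B$ to be a suitable multiple of $\beta_0$, not merely $B\ge\max(1,\beta_0)$.
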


Crucially, the constants $B, C$ do not depend on $\inf_M F$ and are independent of $\gamma$ (if the above bounds for $F=F_\gamma$ are also independent of $\gamma$).  When $\omega^{(\gamma)}$ is replaced by a fixed K\"ahler form, the estimate (\ref{eqap}) with $B=0$ was established in \cite{CTW} (as well as in \cite{CTW0} in a more general setting).  Phong-Sturm \cite[Theorem 1]{PS3} previously showed that under the same assumptions as Theorem \ref{thmap} one has the estimates
\begin{equation} \label{PSbounds}
 | \de \varphi_{\gamma}|^2_g  \le \frac{C}{|s|_h^{2B}}, \quad  | \Delta_g \varphi_{\gamma}|  \le \frac{C}{|s|_h^{2B}},
\end{equation}
on $M\backslash E$, for $B, C$ depending on the same quantities as described in Theorem \ref{thmap} (in fact the dependence can be weakened slightly in the obvious way, replacing the bounds on $\sup_{\partial M} |\nabla^2 \varphi_{\gamma}|_g$ and $\nabla^2 F$ with the appropriate Laplacian bounds).  We will make use of the Phong-Sturm estimates (\ref{PSbounds}) in our proof.
Note also that Theorem \ref{thmap} and its proof remain valid also when $\de M=\emptyset$.

Assuming Theorem \ref{thmap} for the moment, we complete the proof of Theorem \ref{mainthm}.  Let $\varphi_{\gamma}$ solve (\ref{Dp}).  Then it is known from \cite{Bl,CKNS,Ch,Gu} (see also the  expositions in \cite{Bo, PS3}) that we have uniform bounds on  $\sup_M |\varphi_{\gamma}| $, $\sup_{\partial M} |\de \varphi_{\gamma}|_g$ and $\sup_{\partial M} |\nabla^2 \varphi_{\gamma}|_g$.  Define $F=F_{\gamma}$ to be the constant $$F = n \log \gamma$$
 so that we trivially have upper bounds on $\sup_M F$, $\sup_M |\de F|_g$ and a lower bound of $\nabla^2F$.  We can now apply Theorem \ref{thmap} to $\varphi_{\gamma}$ and
 letting $\gamma \rightarrow 0$ we obtain our solution $\varphi \in C^{1,1}_{\textrm{loc}}(M \setminus E)$ of (\ref{HCMA}) as required (the fact that $\vp$ solves \eqref{HCMA} follows easily from the fact that $f_\gamma\to 0$ uniformly, together with the Chern-Levine-Nirenberg inequality, as in \cite{PS3}). This of course coincides with the solution $\vp$ defined as an envelope, by uniqueness.

It remains to prove the \emph{a priori} estimates.

\begin{proof}[Proof of Theorem \ref{thmap}]  Define
$$\varphi_{\gamma,\delta} = \varphi_{\gamma} - (1-\gamma)\delta \log |s|^2_h$$
so that on $M \setminus E$, recalling that we are writing $\omega$ for $\omega_{\delta}$,
\begin{equation} \label{to}
\tilde{\omega} : = \omega^{(\gamma)} + \ddbar \varphi_{\gamma} = \omega+ \ddbar  \varphi_{\gamma, \delta}>0.
\end{equation}
We will write $\tilde{g}_{i\ov{j}}$ for the corresponding K\"ahler metric.  Then the equation (\ref{ma}) can be written
\begin{equation} \label{ma2}
\log \det \tilde{g} = F + \log \det g.
\end{equation}
Note that since $F$ is assumed to be bounded from above, we have from the arithmetic-geometric means inequality,
\begin{equation} \label{ag}
\tr{\ti{\omega}}{\omega} \ge c,
\end{equation}
for a uniform constant $c>0$. The second Phong-Sturm estimate in \eqref{PSbounds} implies that
\begin{equation} \label{db}
\tr{\omega}{\ti{\omega}}\leq \frac{C}{|s|_h^{2B}},
\end{equation}

Up to scaling the section $s$, we can assume without loss of generality that $|s|^2_h\leq 1$ on $M$.

Let $B$ be a uniform constant at least as large as the constant $B$ of the Phong-Sturm estimates  (\ref{PSbounds}).
We apply the maximum principle to the quantity
$$Q = \log \lambda_1( \nabla^2 \varphi_{\gamma}) + \rho(|s|_h^{2B} | \partial \varphi_{\gamma} |^2_g) - A\varphi_{\gamma,\delta},$$
where $\lambda_1( \nabla^2 \varphi_{\gamma})$ is the largest eigenvalue of the real Hessian $\nabla^2 \varphi_{\gamma}$ (with respect to the metric $g$), and this quantity is defined on the set of points where $\lambda_1(\nabla^2 \varphi_\gamma)>0$ (which we may assume is nonempty).
The function $\rho$ is given by
\begin{equation} \label{defnrho}
\rho(\tau) = - \frac{1}{2} \log (1+ \sup_M (|s|_h^{2B}|\partial \varphi_{\gamma}|^2_g)  - \tau),
\end{equation}
and $A>1$ is a constant to be determined (which will be uniform, in the sense that it will depend only on the background data).  Note that $\rho(|s|_h^{2B}| \partial \varphi_{\gamma}|^2_g)$ is uniformly bounded thanks to \eqref{PSbounds}, and
\begin{equation} \label{proph}
\frac{1}{2}\geq \rho' \geq \frac{1}{2+2\sup_M( |s|_h^{2B}|\partial \varphi_{\gamma}|^2_g)}>0, \quad \textrm{and } \rho'' = 2 (\rho')^2,
\end{equation}
where we are evaluating $\rho$ and its derivatives at $|s|_h^{2B}|\de\vp_{\gamma}|^2_g$.

  Note that the maximum of $Q$ is achieved at a point $x_0$ of $M\backslash E$ where $\lambda_1(\nabla^2 \varphi_\gamma)(x_0)>0$, and we may also assume without loss of generality that it is not attained on $\partial M$.  The goal is to prove that at $x_0$ we have
  \begin{equation} \label{goalold}
\lambda_1(\nabla^2 \varphi_{\gamma}) \le C|s|_h^{-4B},
\end{equation}
where here and in the rest of the paper $C$ denotes a positive constant which is uniform, in the sense that it depends only on the allowable background data, and which may change from line to line.
Indeed, if we have this, then at $x_0$ we have
\begin{equation} \label{upperQ}
Q\leq C-2B\log|s|^2_h+\frac{A\delta}{2}\log|s|^2_h\leq C,
\end{equation}
as long as $A\delta \ge 4B$ (recall that $|s|^2_h\leq 1$ and $\gamma \le 1/2$).
Hence $Q\leq C$ holds everywhere, which then implies that
$$\sup_M |s|^{A\delta}_h|\nabla^2\vp_{\gamma}|_g \leq C,$$
as desired.

We apply a perturbation argument, as in \cite{S, STW, CTW0, CTW}, to avoid the situation when  the eigenspace of $\lambda_1$ has dimension greater than $1$.  Pick  holomorphic normal coordinates  centered at $x_0$ such that $(g_{i\ov{j}})$ is the identity and $\tilde{g}_{i\ov{j}}$ is diagonal with $\tilde{g}_{1\ov{1}} \ge \cdots \ge \tilde{g}_{n\ov{n}}$ at $x_0$.  We use the same notation as in \cite{CTW}, in particular using Greek letters for the ``real'' indices ranging from $1$ to $2n$ in contrast to Roman letters for the ``complex'' indices.
Write $V_1$ for a unit eigenvector for $\nabla^2\varphi_{\gamma}$ at $x_0$ corresponding to $\lambda_1$ and extend to an orthonormal basis $V_1, \ldots, V_{2n}$ of eigenvectors at $x_0$ with eigenvalues $\lambda_1 \ge \cdots \ge \lambda_{2n}$.  We extend the $V_{\beta}$ to vector fields in a neighborhood of $x_0$ with constant coefficients $(V_\beta^1,\dots V_\beta^{2n})$ in our coordinates.

Define a local endomorphism $\Phi^{\alpha}_{\ \beta}$ of $TM$ by $\Phi^{\alpha}_{\ \beta}: = g^{\alpha \sigma} \nabla^2_{\sigma \beta} \varphi_{\gamma} - g^{\alpha \sigma} B_{\sigma \beta}$ where $B_{\alpha \beta} : = \delta_{\alpha \beta} - V_1^{\alpha} V_1^{\beta}$ is semi-positive definite.  The $V_{\alpha}$ are eigenvectors for $\Phi$ at $x_0$ with eigenvalues $\lambda_1(\Phi) > \lambda_2(\Phi) \ge \cdots \ge \lambda_{2n} (\Phi)$.  Moreover, $\lambda_1(\Phi) \le \lambda_1(\nabla^2 \varphi_{\gamma})$ near $x_0$ with equality at $x_0$.
We then define
$$\hat{Q} =  \log \lambda_1(\Phi) + \rho(|s|_h^{2B} | \partial \varphi_{\gamma} |^2_g) - A\varphi_{\gamma,\delta}.$$
Note that $\hat{Q}$ still attains a maximum at $x_0$, and by the same argument above it suffices to show that at $x_0$ we have
\begin{equation} \label{goal}
\lambda_1 \le C |s|_h^{-4B},
\end{equation}
where for convenience we write $\lambda_{\alpha}$ for $\lambda_{\alpha}(\Phi)$.

First we have a lemma.

\begin{lemma} \label{lemmafo} Writing $\Delta_{\tilde{g}} = \tilde{g}^{i\ov{j}} \partial_i \partial_{\ov{j}}$ we have at $x_0$,
\begin{equation}\label{equJ}
\begin{split}
\Delta_{\tilde{g}}(\rho(|s|_{h}^{2B}|\de\vp_{\gamma}|_{g}^{2}))
\geq & ~ \frac{\rho'}{2}|s|_{h}^{2B}\sum_{k}\tilde{g}^{i\overline{i}}(|(\varphi_{\gamma})_{ik}|^{2}+|(\vp_{\gamma})_{i\ov{k}}|^{2})
\\ & +\rho''\tilde{g}^{i\overline{i}}|\partial_{i}(|s|_{h}^{2B}|\de\vp_{\gamma}|_{g}^{2})|^{2}
  -C\sum_{i}\tilde{g}^{i\overline{i}}.
\end{split}
\end{equation}
\end{lemma}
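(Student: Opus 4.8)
The plan is to treat this as a chain-rule computation that isolates the ``good'' Hessian term, using the Monge--Amp\`ere equation and the Phong--Sturm bounds to dispose of everything else. Write $u=\vp_\gamma$, $\eta=|s|_h^{2B}$, $L=\log|s|_h^2$ and $W=\eta\,|\de u|_g^2$, so the quantity in question is $\rho(W)$. By the chain rule,
\[
\Delta_{\tilde g}(\rho(W))=\rho'(W)\,\Delta_{\tilde g}W+\rho''(W)\,\tilde g^{i\ov j}\partial_iW\,\partial_{\ov j}W,
\]
and in our coordinates at $x_0$ the last term is exactly $\rho''\tilde g^{i\ov i}|\partial_iW|^2$, the second term on the right-hand side; so it suffices to bound $\rho'\Delta_{\tilde g}W$ from below by (first term)$\,-C\sum_i\tilde g^{i\ov i}$. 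Two observations streamline the bookkeeping: \eqref{PSbounds} gives $W\le C$, and \eqref{ag} gives $\sum_i\tilde g^{i\ov i}=\tr{\ti\omega}{\omega}\ge c>0$, so \emph{every} bounded error term is $\le C\sum_i\tilde g^{i\ov i}$; also by \eqref{proph} $\rho'$ is pinched between two positive constants, so one may pass between $\rho'\Delta_{\tilde g}W$ and $\Delta_{\tilde g}W$ freely at the cost of changing $C$.

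Next I would expand $\Delta_{\tilde g}W=\tilde g^{i\ov i}\partial_i\partial_{\ov i}(\eta\, g^{k\ov l}u_ku_{\ov l})$ and separate the terms into (i) the main term $\eta\,\tilde g^{i\ov i}\partial_i\partial_{\ov i}(g^{k\ov l}u_ku_{\ov l})$ and (ii) the terms in which a derivative falls on $\eta$. For (i): at $x_0$ (normal coordinates for $g$), $\partial_i\partial_{\ov i}(g^{k\ov l}u_ku_{\ov l})$ produces the positive term $\sum_k(|u_{ik}|^2+|u_{i\ov k}|^2)$, a curvature term of size $O(|\de u|_g^2)$ (hence $O(W)$ after multiplying by $\eta$, absorbed into $C\sum_i\tilde g^{i\ov i}$), and the third-order expression $2\,\mathrm{Re}\sum_k u_{i\ov ik}u_{\ov k}$. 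The third-order piece I would control by differentiating \eqref{ma2} once, which gives $\sum_i\tilde g^{i\ov i}(\vp_{\gamma,\delta})_{i\ov ik}=\partial_kF$ at $x_0$; since $(\vp_{\gamma,\delta})_{i\ov i}=u_{i\ov i}-(1-\gamma)\delta\,\partial_i\partial_{\ov i}L$ and $\partial_i\partial_{\ov i}L=-(R_h)_{i\ov i}$ is smooth on $M\setminus E$ (the Lelong--Poincar\'e identity), one gets $\bigl|\sum_i\tilde g^{i\ov i}u_{i\ov ik}\bigr|\le |\partial_kF|+C\sum_i\tilde g^{i\ov i}$, and combined with $\eta|u_k|\le \eta^{1/2}W^{1/2}\le C$ this makes the third-order contribution $\ge -C\sum_i\tilde g^{i\ov i}$.

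For (ii), compute $\partial_i\eta=\eta B\,\partial_iL$ and, on $M\setminus E$, $\partial_i\partial_{\ov i}\eta=\eta B^2|\partial_iL|^2-\eta B(R_h)_{i\ov i}$. The term $\eta B^2|\partial_iL|^2|\de u|_g^2\ge0$ may be dropped in a lower bound, but it is precisely what will absorb the dangerous cross terms; the term $-\eta B(R_h)_{i\ov i}|\de u|_g^2=O(W)$ is bounded. The singular pieces are the cross terms $2B\,\mathrm{Re}\bigl[\eta\,\partial_iL\cdot\partial_{\ov i}(g^{k\ov l}u_ku_{\ov l})\bigr]$. Using $\partial_{\ov i}(g^{k\ov l}u_ku_{\ov l})=\sum_k(u_{k\ov i}u_{\ov k}+u_ku_{\ov k\ov i})$ at $x_0$: in the first part, since $\tilde g$ is diagonal at $x_0$ and $\partial_i\partial_{\ov i}L$ is bounded, one has $u_{k\ov i}=(\tilde g_{i\ov i}-1)\delta_{ki}+O(1)$ (recall $\ti\omega=\omega+\ddbar\vp_{\gamma,\delta}$, cf. \eqref{to}), so after contracting with $\tilde g^{i\ov i}$ one uses $\tilde g^{i\ov i}\tilde g_{i\ov i}=1$ together with $\eta^{1/2}|\partial_iL|\le C|s|_h^{B-1}\le C$ and $\eta^{1/2}|u_k|\le C$ to bound this part by $C\sum_i\tilde g^{i\ov i}$; the second part, which involves the \emph{uncontrolled} $(2,0)$-Hessian $u_{ik}$, combines with the $|u_{ik}|^2$-part of the good term from (i) and the dropped term $\eta B^2|\partial_iL|^2|\de u|_g^2$ into the perfect square $\eta\sum_k|u_{ik}+Bu_k\,\partial_iL|^2\ge0$. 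Collecting these estimates — the $(1,1)$-Hessian term surviving intact and the $(2,0)$-Hessian term consumed by the square — yields the asserted inequality (the factor $\tfrac{\rho'}{2}$ leaves the needed slack).

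I expect the main obstacle to be precisely step (ii). Because $\partial_iL=\partial_i\log|s|_h^2$ blows up like $|s|_h^{-1}$ along $E$ and the $(2,0)$-Hessian $u_{ik}$ is a priori uncontrolled, the cross terms are genuinely singular and nothing can be wasted. What makes it all close is the confluence of: the exponent $2B$ being large enough (by Phong--Sturm) that $W$ is bounded; $\partial\ov\partial\log|s|_h^2=-R_h$ being bounded on $M\setminus E$; $\sum_i\tilde g^{i\ov i}=\tr{\ti\omega}{\omega}\ge c$, so that bounded errors are automatically harmless; and the singular term $\eta B^2|\partial_iL|^2|\de u|_g^2$ appearing with the \emph{correct sign} to be completed, together with the cross term, into the perfect square above. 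The tracking of curvature terms and of the passage between $\vp_{\gamma,\delta}$ and $\vp_\gamma$ is routine, and I have only indicated it.
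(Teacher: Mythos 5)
Your overall strategy (chain rule for $\rho$, differentiating \eqref{ma2} to handle the third‑order term, using \eqref{PSbounds} together with \eqref{ag} to discard bounded errors, and absorbing the singular cross terms coming from $\partial_i(|s|_h^{2B})$) is the same as the paper's, and most of your bookkeeping is correct. But the way you dispose of the $(2,0)$ cross term creates a genuine gap: the perfect square $\eta\,\tilde g^{i\ov i}\sum_k|u_{ik}+Bu_k\partial_iL|^2\ge 0$ has coefficient $1$ on $|u_{ik}|^2$, so it consumes the \emph{entire} good term $\eta\,\tilde g^{i\ov i}\sum_k|u_{ik}|^2$ produced in your step (i). Collecting your estimates then yields only
$\Delta_{\tilde g}\bigl(|s|_h^{2B}|\de\vp_\gamma|_g^2\bigr)\ge |s|_h^{2B}\sum_k\tilde g^{i\ov i}|(\vp_\gamma)_{i\ov k}|^2-C\sum_i\tilde g^{i\ov i}$,
with no $|(\vp_\gamma)_{ik}|^2$ term at all, whereas \eqref{equJ} asserts the lower bound with $\frac{\rho'}{2}|s|_h^{2B}\sum_k\tilde g^{i\ov i}\bigl(|(\vp_\gamma)_{ik}|^2+|(\vp_\gamma)_{i\ov k}|^2\bigr)$. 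The $(2,0)$ part is not a luxury: it is exactly what is used later (via Lemma \ref{lemmauno} and \eqref{temp}) to control the full real Hessian and conclude $\lambda_1\le C|s|_h^{-2B}$, since $\lambda_1(\nabla^2\vp_\gamma)$ sees $\mathrm{Re}\,(\vp_\gamma)_{ik}$ and $\mathrm{Im}\,(\vp_\gamma)_{ik}$ as well as $(\vp_\gamma)_{i\ov k}$. Your remark that ``the factor $\tfrac{\rho'}{2}$ leaves the needed slack'' is where the error hides: the slack in \eqref{equJ} is the factor $\tfrac12$ in front of the Hessian sum (and the boundedness of $\rho'$ from \eqref{proph}), which allows you to sacrifice \emph{half} of $|u_{ik}|^2$, not all of it; $\rho'$ cannot resurrect a term that has been completely used up before the chain rule is applied.

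The repair is short and brings you back to the paper's argument: instead of an exact square, apply Young's inequality, $2B\,\mathrm{Re}\bigl(\ov{\partial_iL}\,\ov{u_k}\,u_{ik}\bigr)\ge -\tfrac12|u_{ik}|^2-2B^2|u_k|^2|\partial_iL|^2$, multiply by $\eta\,\tilde g^{i\ov i}$ and sum; the leftover singular factor satisfies $\eta B^2|\de u|_g^2|\partial_iL|^2\le C|s|_h^{2B-2}|\de\vp_\gamma|_g^2\le C$ once $B$ is increased (this is precisely the paper's device of enlarging $B$ so that $|s|_h^{2B-4}|\de\vp_\gamma|_g^2\le C$ via \eqref{PSbounds}), so it is absorbed into $C\sum_i\tilde g^{i\ov i}$ and you keep half of $|u_{ik}|^2$ and all of $|u_{i\ov k}|^2$, which gives \eqref{equJ}. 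With this correction your proof is essentially the paper's: the paper treats both cross pieces simultaneously by one Cauchy--Schwarz against half of both Hessian terms, while your separate, more structural handling of the $(1,1)$ cross piece (writing $u_{k\ov i}=\tilde g_{k\ov i}+O(1)$ and using $\tilde g^{i\ov i}\tilde g_{i\ov i}=1$) is also fine and makes the positive term from $\Delta_{\tilde g}(|s|_h^{2B})$ unnecessary.
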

\begin{proof}  We compute everything at $x_0$, using the normal coordinate system described above.
Differentiating (\ref{ma2}) we obtain
$$\tilde{g}^{i\ov{i}} ( \partial_{\ov{k}} g^{(\gamma)}_{i\ov{i}} + \partial_{\ov{k}} \partial_i \partial_{\ov{i}} \varphi_{\gamma}) = F_{\ov{k}}.$$
Hence, using (\ref{PSbounds}),
\begin{equation}
\begin{split}
\Delta_{\tilde{g}} ( | \partial \varphi_{\gamma}|^2_g) ={} & \sum_k \tilde{g}^{i\ov{i}} (| (\varphi_{\gamma})_{ik}|^2 + | (\varphi_{\gamma})_{i\ov{k}}|^2) + \tilde{g}^{i\ov{i}} \partial_i \partial_{\ov{i}} (g^{k\ov{\ell}}) (\varphi_{\gamma})_k (\varphi_{\gamma})_{\ov{\ell}} \\ {} & + 2\textrm{Re}\left( \sum_k (\varphi_{\gamma})_k (F_{\ov{k}} - \tilde{g}^{i\ov{i}} \partial_{\ov{k}} g^{(\gamma)}_{i\ov{i}} )\right)   \\
\ge {} & \sum_k \tilde{g}^{i\ov{i}} (| (\varphi_{\gamma})_{ik}|^2+ | (\varphi_{\gamma})_{i\ov{k}}|^2) - C |s|^{-2B}_h \sum_i \tilde{g}^{i\ov{i}},
\end{split}
\end{equation}
where we have used (\ref{ag}).
On $M\setminus E$, we have
$$\ddbar |s|_h^{2B}\geq|s|_h^{2B}\ddbar\log|s|^{2B}_h=-B|s|_h^{2B}R_h\geq -C|s|^{2B}_h\omega.$$
Then
\begin{equation}\label{L}
\begin{split}
\Delta_{\tilde{g}}(|s|_h^{2B}|\de\vp_{\gamma}|^2_g)={} & |s|_h^{2B}\Delta_{\tilde{g}}(|\de\vp_{\gamma}|^2_g)+|\de\vp_{\gamma}|^2_g\Delta_{\tilde{g}}(|s|_h^{2B}) \\ {} & +2\mathrm{Re}\left(\ti{g}^{i\ov{i}}\partial_i(|s|_h^{2B})\partial_{\ov{i}}(|\de\vp_{\gamma}|^2_g)\right)\\
\geq {} &  |s|_h^{2B}\sum_k \tilde{g}^{i\ov{i}} (| (\varphi_{\gamma})_{ik}|^2+ | (\varphi_{\gamma})_{i\ov{k}}|^2)  - C \sum_i \tilde{g}^{i\ov{i}}\\
&+2\mathrm{Re}\left(\ti{g}^{i\ov{i}}\partial_i(|s|_h^{2B})\partial_{\ov{i}}(|\de\vp_{\gamma}|^2_g)\right).\\
\end{split}
\end{equation}
We need to deal with the third term on the right hand side of this inequality.  Using Cauchy-Schwarz and Young's inequalities,
\[
\begin{split}
\lefteqn{2\mathrm{Re}\left(\ti{g}^{i\ov{i}}\partial_i(|s|_h^{2B})\partial_{\ov{i}}(|\de\vp_{\gamma}|^2_g)\right) } \\
= {} & 2 \textrm{Re} \left( \ti{g}^{i\ov{i}} B |s|_h^{2(B-1)} (\partial_i |s|^2_h) \sum_k \left( (\varphi_{\gamma})_{k\ov{i}} (\varphi_{\gamma})_{\ov{k}} + (\varphi_{\gamma})_k (\varphi_{\gamma})_{\ov{k} \ov{i}} \right) \right) \\
\ge {} & - \frac{|s|^{2B}_h}{2} \sum_k \tilde{g}^{i\ov{i}} (| (\varphi_{\gamma})_{ik}|^2+ | (\varphi_{\gamma})_{i\ov{k}}|^2) -  2B^2  |s|_h^{2B-4} |\partial \varphi_\gamma|^2_g \tilde{g}^{i\ov{i}} | \partial_i |s|^2_h|^2 \\
\ge {} & - \frac{|s|^{2B}_h}{2} \sum_k \tilde{g}^{i\ov{i}} (| (\varphi_{\gamma})_{ik}|^2+ | (\varphi_{\gamma})_{i\ov{k}}|^2) - C \sum_i \tilde{g}^{i\ov{i}},
\end{split}
\]
where for the last line we used the fact that $|s|^2_h$ is smooth, and we increased $B$ if necessary to ensure that $|\partial \varphi_\gamma|^2_g |s|_h^{2B-4} \le C$ by \eqref{PSbounds}.  Combining this with (\ref{L}) gives
\begin{equation}\label{ggg}
\Delta_{\tilde{g}}(|s|_h^{2B}|\de\vp_{\gamma}|^2_g)
\geq \frac{|s|_h^{2B}}{2} \sum_k \tilde{g}^{i\ov{i}} (| (\varphi_{\gamma})_{ik}|^2+ | (\varphi_{\gamma})_{i\ov{k}}|^2)  - C \sum_i \tilde{g}^{i\ov{i}},
\end{equation}
and (\ref{equJ}) follows, using the fact that $0<\rho'\le C$.
\end{proof}

We then obtain the following lower bound for $\Delta_{\ti{g}} \hat{Q}$, analogous to \cite[Lemma 2.1]{CTW}.

\begin{lemma} \label{lemmalbl1}  At $x_0$, we have
\begin{equation} \label{LhatQ4}
\begin{split}
0 \ge \Delta_{\ti{g}} \hat{Q}
\ge {} & 2 \sum_{\alpha >1}  \frac{\tilde{g}^{i\ov{i}} |\partial_i ((\varphi_{\gamma})_{V_{\alpha} V_1})|^2}{\lambda_1(\lambda_1-\lambda_{\alpha})} + \frac{\tilde{g}^{p\ov{p}} \tilde{g}^{q\ov{q}} | V_1(\tilde{g}_{p\ov{q}})|^2}{\lambda_1} - \frac{\tilde{g}^{i\ov{i}} | \partial_i ((\varphi_{\gamma})_{V_1 V_1})|^2}{\lambda_1^2} \\
{} & + |s|_h^{2B}\frac{\rho'}{2} \sum_k \tilde{g}^{i\ov{i}} (| (\varphi_{\gamma})_{ik}|^2 + | (\varphi_{\gamma})_{i\ov{k}}|^2) + \rho'' \tilde{g}^{i\ov{i}} |\partial_i (|s|_h^{2B}| \partial \varphi_{\gamma}|^2_g)|^2 \\ {} &+ \left(A-C\right) \sum_i \tilde{g}^{i\ov{i}}
 - An,
\end{split}
\end{equation}
where
$$(\varphi_{\gamma})_{V_{\alpha} V_{\beta}} = \nabla^2 \varphi_{\gamma} (V_{\alpha}, V_{\beta}).$$
\end{lemma}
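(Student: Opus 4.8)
The plan is to run the maximum principle at $x_0$. Since $x_0$ is an interior point of $M\setminus E$ at which the largest eigenvalue of $\Phi$ is simple (by the perturbation with $B$) and at which $\varphi_{\gamma,\delta}$ and $\rho(\cdot)$ are smooth, the function $\hat{Q}$ is $C^2$ near $x_0$ and attains a local maximum there, so $0\geq\Delta_{\tilde{g}}\hat{Q}(x_0)$. By linearity it suffices to bound from below each of the three pieces of $\hat{Q}=\log\lambda_1(\Phi)+\rho(|s|_h^{2B}|\de\varphi_{\gamma}|^2_g)-A\varphi_{\gamma,\delta}$. It is also convenient to assume at the outset that $\lambda_1=\lambda_1(x_0)\geq|s|_h^{-2B}$, since otherwise \eqref{goal} holds directly (recall $|s|_h^2\leq 1$) and there is nothing left to prove.

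The contribution of $-A\varphi_{\gamma,\delta}$ is immediate: by \eqref{to} we have $\ddbar\varphi_{\gamma,\delta}=\tilde{\omega}-\omega$, so $\Delta_{\tilde{g}}\varphi_{\gamma,\delta}=\tilde{g}^{i\ov{j}}(\tilde{g}_{i\ov{j}}-g_{i\ov{j}})=n-\tr{\tilde{g}}{g}$, which at $x_0$ (where $g$ is the identity and $\tilde{g}$ is diagonal) equals $n-\sum_i\tilde{g}^{i\ov{i}}$; this produces the terms $A\sum_i\tilde{g}^{i\ov{i}}-An$. The contribution of $\rho(|s|_h^{2B}|\de\varphi_{\gamma}|^2_g)$ is exactly Lemma \ref{lemmafo}, which supplies the $\rho'$- and $\rho''$-terms together with an error $-C\sum_i\tilde{g}^{i\ov{i}}$.

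The bulk of the work is $\Delta_{\tilde{g}}\log\lambda_1(\Phi)$, and here I would follow the proof of \cite[Lemma 2.1]{CTW}. Since $\Phi=g^{-1}(\nabla^2\varphi_{\gamma}-B)$ is $g$-self-adjoint with simple top eigenvalue at $x_0$, and $\Phi_{V_1V_{\alpha}}=(\varphi_{\gamma})_{V_1V_{\alpha}}$ in a neighborhood of $x_0$ (because $B(V_1,V_{\alpha})$ is a constant which vanishes at $x_0$), the standard second-order perturbation formula for the largest eigenvalue, combined with the fact that $g$ and $B$ are smooth with bounds uniform in $\gamma$, gives at $x_0$
\[
\Delta_{\tilde{g}}\log\lambda_1(\Phi)\geq\frac{\tilde{g}^{i\ov{i}}\partial_i\partial_{\ov{i}}((\varphi_{\gamma})_{V_1V_1})}{\lambda_1}+2\sum_{\alpha>1}\frac{\tilde{g}^{i\ov{i}}|\partial_i((\varphi_{\gamma})_{V_{\alpha}V_1})|^2}{\lambda_1(\lambda_1-\lambda_{\alpha})}-\frac{\tilde{g}^{i\ov{i}}|\partial_i((\varphi_{\gamma})_{V_1V_1})|^2}{\lambda_1^2}-C\sum_i\tilde{g}^{i\ov{i}}.
\]
It then remains to analyze the first term on the right. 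Since the coordinates are holomorphic and $V_1$ has constant coefficients, the operators $\partial_i$, $\partial_{\ov{i}}$ and $V_1$ all commute; writing $(\varphi_{\gamma})_{V_1V_1}=V_1V_1\varphi_{\gamma}-\Gamma^{\rho}_{V_1V_1}\partial_{\rho}\varphi_{\gamma}$ (with $\Gamma$ the Christoffel symbols of $g$, vanishing at $x_0$) together with $\partial_i\partial_{\ov{i}}\varphi_{\gamma}=\tilde{g}_{i\ov{i}}-g^{(\gamma)}_{i\ov{i}}$ reduces this to $\tilde{g}^{i\ov{j}}V_1V_1(\tilde{g}_{i\ov{j}})$ plus lower-order pieces (derivatives of $g^{(\gamma)}$, and derivatives of $\Gamma$ times first and second derivatives of $\varphi_{\gamma}$). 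Differentiating the Monge--Amp\`ere equation \eqref{ma2} twice along $V_1$ then gives, at $x_0$,
\[
\tilde{g}^{i\ov{j}}V_1V_1(\tilde{g}_{i\ov{j}})=\sum_{p,q}\tilde{g}^{p\ov{p}}\tilde{g}^{q\ov{q}}|V_1(\tilde{g}_{p\ov{q}})|^2+\nabla^2 F(V_1,V_1)+V_1V_1(\log\det g),
\]
where the first term on the right is $\geq 0$ (since $V_1$ is a real operator and $\tilde{g}$ is Hermitian) and $\nabla^2 F(V_1,V_1)+V_1V_1(\log\det g)\geq -C$ by the lower bound on $\nabla^2 F$ and the smoothness of $g$. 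This supplies precisely the good term $\sum_{p,q}\tilde{g}^{p\ov{p}}\tilde{g}^{q\ov{q}}|V_1(\tilde{g}_{p\ov{q}})|^2/\lambda_1$ of \eqref{LhatQ4}, and adding the three contributions yields \eqref{LhatQ4}.

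The step I expect to be the main obstacle is the control of the leftover lower-order terms, all of which must be absorbed into $C\sum_i\tilde{g}^{i\ov{i}}$. Several of them carry a factor $|\de\varphi_{\gamma}|_g$ or a second derivative of $\varphi_{\gamma}$, and by the Phong--Sturm estimates \eqref{PSbounds} these are only bounded by $C|s|_h^{-B}$ and by $C\lambda_1$ respectively (the latter using \eqref{PSbounds} to bound the remaining eigenvalues of $\nabla^2\varphi_{\gamma}$ by $C\lambda_1$, once $\lambda_1\geq|s|_h^{-2B}$). Since in $\Delta_{\tilde{g}}\log\lambda_1(\Phi)$ all such terms appear divided by $\lambda_1$ or $\lambda_1^2$, the reduction $\lambda_1\geq|s|_h^{-2B}$ together with $|s|_h^2\leq 1$ makes each of them $\leq C\sum_i\tilde{g}^{i\ov{i}}$, while bounded constants are absorbed using $\sum_i\tilde{g}^{i\ov{i}}\geq c>0$ from \eqref{ag}. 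Finally one must verify that every constant is independent of $\gamma$, which holds because the background metric $g$ in \eqref{ma2} is fixed and $\omega^{(\gamma)}=\omega_0-\gamma\delta R_h$ (and the associated metric $g^{(\gamma)}$) ranges over a bounded family for $\gamma\in[0,1/2]$.
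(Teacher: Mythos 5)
Your proposal is correct and follows essentially the same route as the paper: split $\Delta_{\tilde g}\hat Q$ into the three pieces, handle the $\rho$-piece via Lemma \ref{lemmafo}, the linear piece via \eqref{to}, and the $\log\lambda_1$-piece via the second-order perturbation formula from \cite[Lemma 2.1]{CTW} followed by two applications of $V_1$ to \eqref{ma2}. The only cosmetic difference is the WLOG reduction: you assume $\lambda_1\ge|s|_h^{-2B}$ at the outset, while the paper directly assumes $|\partial\varphi_\gamma|_g\le C\lambda_1$; both are valid (each implies the goal \eqref{goal} holds trivially in the excluded case, by \eqref{PSbounds} and $|s|_h\le 1$), and both are exactly what is needed to absorb the lower-order errors involving $|\partial\varphi_\gamma|_g$ and $|\nabla^2\varphi_\gamma|$ into $C\sum_i\tilde g^{i\bar i}$ after dividing by $\lambda_1$. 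Your remark that $B(V_1,V_\alpha)$ "vanishes at $x_0$" understates the point slightly — since $B$ and the $V_\beta$ have constant coefficients and the $V_\beta$ are Euclidean-orthonormal, $B(V_1,V_\alpha)$ vanishes identically, which is precisely why $\Phi_{V_1V_\alpha}=(\varphi_\gamma)_{V_1V_\alpha}$ in a neighborhood — but this does not affect the argument.
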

\begin{proof}
From the same proof as equation (2.8) in \cite{CTW} we have
$$\Delta_{\ti{g}} \lambda_1 \ge 2\sum_{\alpha>1} \tilde{g}^{i\ov{i}} \frac{| \partial_i ((\varphi_{\gamma})_{V_{\alpha} V_1}) |^2}{\lambda_1-\lambda_{\alpha}} + \tilde{g}^{i\ov{i}} \partial_i \partial_{\ov{i}} (( \varphi_{\gamma})_{V_1 V_1})
 - C\lambda_1 \sum_i \tilde{g}^{i\ov{i}}.$$
 Compute
 \[
 \begin{split}
 \tilde{g}^{i\ov{i}} \partial_i \partial_{\ov{i}} (( \varphi_{\gamma})_{V_1 V_1}) \ge {} & \tilde{g}^{i\ov{i}} V_1 V_1 (\partial_i \partial_{\ov{i}} \varphi_{\gamma}) - C(| \partial \varphi_{\gamma}|_g + \lambda_1) \sum_i \tilde{g}^{i\ov{i}} \\
 \ge {} & \tilde{g}^{i\ov{i}} V_1 V_1 (\tilde{g}_{i\ov{i}}) -  C\lambda_1 \sum_i \tilde{g}^{i\ov{i}}, \\
 \end{split}
\]
where for the last inequality we assumed, without loss of generality (due to the bound (\ref{PSbounds}) and our goal (\ref{goal})), that $|\partial \varphi_{\gamma}|_g \le C \lambda_1$ at $x_0$.
Applying $V_1V_1$ to (\ref{ma2}) we obtain
\[
\begin{split}
\tilde{g}^{i\ov{i}} V_1 V_1 (\tilde{g}_{i\ov{i}}) = {} & \tilde{g}^{p\ov{p}} \tilde{g}^{q\ov{q}} |V_1(\tilde{g}_{p\ov{q}})|^2  + V_1 V_1(F)+ V_1V_1(\log \det g) \\
\ge {} & \tilde{g}^{p\ov{p}} \tilde{g}^{q\ov{q}} |V_1(\tilde{g}_{p\ov{q}})|^2 - C.
\end{split}
\]
 Combining the above gives
$$\Delta_{\ti{g}} \lambda_1 \ge 2\sum_{\alpha>1} \tilde{g}^{i\ov{i}} \frac{| \partial_i ((\varphi_{\gamma})_{V_{\alpha} V_1}) |^2}{\lambda_1-\lambda_{\alpha}} + \tilde{g}^{p\ov{p}} \tilde{g}^{q\ov{q}} |V_1 (\tilde{g}_{p\ov{q}})|^2 - C\lambda_1 \sum_i \tilde{g}^{i\ov{i}},$$
and hence
\[
\begin{split}
\Delta_{\ti{g}} \log \lambda_1 \ge {} & 2\sum_{\alpha>1} \tilde{g}^{i\ov{i}} \frac{| \partial_i ((\varphi_{\gamma})_{V_{\alpha} V_1}) |^2}{\lambda_1(\lambda_1-\lambda_{\alpha})} + \frac{\tilde{g}^{p\ov{p}} \tilde{g}^{q\ov{q}} |V_1 (\tilde{g}_{p\ov{q}})|^2}{\lambda_1} \\ {} & - \frac{\tilde{g}^{i\ov{i}} | \partial_i ((\varphi_{\gamma})_{V_1 V_1})|^2}{\lambda_1^2}
  - C  \sum_i \tilde{g}^{i\ov{i}}.
\end{split}
\]
From (\ref{to}) we have
$$- \Delta_{\ti{g}} (A \varphi_{\gamma, \delta}) = A\tilde{g}^{i\ov{i}} ( g_{i\ov{i}} - \ti{g}_{i\ov{i}}) = A \sum_i \tilde{g}^{i\ov{i}} - An.$$
Applying Lemma \ref{lemmafo}, the inequality (\ref{LhatQ4}) follows.
\end{proof}

Next we have an analog of \cite[Lemma 2.2]{CTW}.

\begin{lemma}\label{lemmauno} There is a uniform constant $C\geq 1$ such that if $0 < \ve< 1/2$ and $\lambda_1(x_0) \ge C/\ve^2,$ then at $x_0$ we have
\begin{equation}
\begin{split}
\sum_i \frac{\tilde{g}^{i\ov{i}} |\partial_i ((\varphi_{\gamma})_{V_1V_1})|^2}{\lambda_1^2} \le {} & 2(\rho')^2 \tilde{g}^{i\ov{i}} |\partial_i (|s|_h^{2B}|\partial \varphi_{\gamma}|_g^2)|^2 + 4 \ve A^{2} \tilde{g}^{i\ov{i}}|(\varphi_{\gamma, \delta})_i|^2 \\
{} & +2\sum_{\alpha>1} \frac{\tilde{g}^{i\ov{i}} | \partial_i ((\varphi_{\gamma})_{V_{\alpha} V_1})|^2}{\lambda_1(\lambda_1-\lambda_{\alpha})} + \frac{\tilde{g}^{p\ov{p}} \tilde{g}^{q\ov{q}} |V_1(\tilde{g}_{p\ov{q}})|^2}{\lambda_1} +  \sum_i \tilde{g}^{i\ov{i}}.
\end{split}
\end{equation}
\end{lemma}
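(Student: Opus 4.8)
The plan is to adapt the argument of \cite[Lemma 2.2]{CTW} to the degenerate setting; the new ingredients are the weight $|s|_h^{2B}$ appearing inside $\rho$ and the unbounded term $-(1-\gamma)\delta\log|s|^2_h$ inside $\vp_{\gamma,\delta}$. First I would use that $\hat{Q}$ attains its maximum at $x_0$, so $\de_i\hat{Q}=0$ for every $i$. Since $\lambda_1(\Phi)$ is a \emph{simple} eigenvalue of $\Phi$ at $x_0$ (this is the purpose of the perturbation by $B$), eigenvalue perturbation together with the normal coordinates gives $\de_i\lambda_1=\de_i((\vp_\gamma)_{V_1V_1})$ at $x_0$ --- the identity already used in the proof of Lemma \ref{lemmalbl1}, the contribution of $B$ dropping out since $B(V_1,V_1)\equiv 0$ in our coordinates --- so that
$$\frac{\de_i((\vp_\gamma)_{V_1V_1})}{\lambda_1}=A(\vp_{\gamma,\delta})_i-\rho'\,\de_i(|s|_h^{2B}|\de\vp_\gamma|^2_g)\qquad\textrm{at }x_0,$$
and likewise with $\de_i$ replaced by the real direction $V_1$. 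Contracting the square with $\ti{g}^{i\ov{i}}$ and summing rewrites the left-hand side of the lemma, but this alone does not yield the sharp constants $2(\rho')^2$ and $4\ve A^2$ simultaneously, so I would pair it with a second, independent expression for $\de_i((\vp_\gamma)_{V_1V_1})$.

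That expression comes from commuting covariant derivatives: because the curvature of the K\"ahler metric $g$ has no $(2,0)$-part, third covariant derivatives of $\vp_\gamma$ are symmetric in all slots up to a Ricci-type correction of size $O(|\de\vp_\gamma|_g)$. Using this, the equation \eqref{ma2} differentiated once along $V_1$ (as in the proof of Lemma \ref{lemmalbl1}), and the decomposition of $\de_i$ in the orthonormal eigenframe $\{V_\alpha\}$ of $\nabla^2\vp_\gamma$ at $x_0$, one writes $\de_i((\vp_\gamma)_{V_1V_1})$ as a sum of four contributions: a self-term proportional to the $\de_j((\vp_\gamma)_{V_1V_1})$, with a coefficient small enough that the Cauchy-Schwarz step below absorbs at most half of the left-hand side of the lemma; terms built from $\de_j((\vp_\gamma)_{V_\alpha V_1})$, $\alpha>1$, which feed into $2\sum_{\alpha>1}\frac{\ti{g}^{i\ov{i}}|\de_i((\vp_\gamma)_{V_\alpha V_1})|^2}{\lambda_1(\lambda_1-\lambda_\alpha)}$, using the gap $\lambda_1-\lambda_\alpha\ge 1$ supplied by the perturbation; terms built from $V_1(\ti{g}_{p\ov{q}})$, which feed into $\frac{\ti{g}^{p\ov{p}}\ti{g}^{q\ov{q}}|V_1(\ti{g}_{p\ov{q}})|^2}{\lambda_1}$; and errors of the form $O(|\de\vp_\gamma|_g)+O(1)$.

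I would then substitute this second expression into one of the two factors of $|\de_i((\vp_\gamma)_{V_1V_1})|^2/\lambda_1^2$, apply Cauchy-Schwarz and Young's inequality, and move the self-term to the left: solving for the left-hand side produces the overall factor $2$ in front of the terms on the right of the lemma. The remaining free parameters are chosen so that the coefficient of $\ti{g}^{i\ov{i}}|\de_i(|s|_h^{2B}|\de\vp_\gamma|^2_g)|^2$ becomes exactly $2(\rho')^2=\rho''$ --- the value forced by the cancellation in Lemma \ref{lemmalbl1} --- and so that $\ti{g}^{i\ov{i}}|(\vp_{\gamma,\delta})_i|^2$ ends up with coefficient at most $4\ve A^2$. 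The latter reduction is precisely where the hypothesis $\lambda_1(x_0)\ge C/\ve^2$ is used: all genuinely lower-order contributions, namely those of the form $O(|\de\vp_\gamma|_g)/\lambda_1$ and $O(1)/\lambda_1$, become $O(\ve)$ once one invokes the Phong-Sturm bound $|\de\vp_\gamma|^2_g\le C|s|_h^{-2B}$ (enlarging $B$ if necessary, as in Lemma \ref{lemmafo}), and all other errors are collected into $\sum_i\ti{g}^{i\ov{i}}$.

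The step I expect to be the main obstacle is the bookkeeping of these error terms near $E$. Differentiating $|s|_h^{2B}$ produces a factor $|s|_h^{2(B-1)}\de_i|s|^2_h$, differentiating the $-(1-\gamma)\delta\log|s|^2_h$ piece of $\vp_{\gamma,\delta}$ produces the unbounded $\de_i\log|s|^2_h$, and these interact both with the already unbounded $\de\vp_{\gamma,\delta}$ and with the Ricci-type correction terms from the commutators. One must check that every such error is genuinely dominated by $\sum_i\ti{g}^{i\ov{i}}$ after using \eqref{PSbounds} and, where necessary, increasing $B$, \emph{without} disturbing the two exact constants $2(\rho')^2$ and $4\ve A^2$ on which the cancellation in Lemma \ref{lemmalbl1} rests.
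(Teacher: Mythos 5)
Your high-level plan is the right one — the paper itself explicitly points to \cite[Lemma 2.2]{CTW} and only spells out the two places where the argument must be modified for the degenerate setting. You correctly recover the starting point (the first-order condition at $x_0$, the role of the perturbation $B$ in isolating $\lambda_1$ as a simple eigenvalue so that $\de_i\lambda_1 = \de_i((\vp_\gamma)_{V_1V_1})$), the role of the eigenframe $\{V_\alpha\}$ and the gap $\lambda_1-\lambda_\alpha\geq 1$, and the need to invoke \eqref{PSbounds} and enlarge $B$ to tame the extra derivatives of $|s|_h^{2B}$ and $\log|s|_h^2$. Your last paragraph also correctly flags the real bottleneck: keeping the error bookkeeping from disturbing the two exact coefficients.

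There is, however, one concrete device in the paper's proof that your sketch does not contain and that the argument cannot do without. Since the analogue of \cite[(2.5)]{CTW} here is the much weaker bound \eqref{db}, $\tr{\omega}{\ti{\omega}}\le C|s|_h^{-2B}$, the Cauchy--Schwarz step which produces the $V_1(\ti{g}_{p\ov{q}})$ term on the right-hand side — the analogue of \cite[(2.21)]{CTW} — picks up an extra factor of $\ti{g}_{1\ov{1}}\le C|s|_h^{-2B}$, and the resulting coefficient $\frac{C}{\ve|s|_h^{2B}\lambda_1}$ is \emph{not} controlled by $\lambda_1\geq C/\ve^2$ alone. The paper handles this by first observing that one may assume without loss of generality that $\lambda_1 \geq C|s|_h^{-4B}$ at $x_0$ (otherwise \eqref{goal} already holds and one is done), which combined with $\lambda_1\geq C/\ve^2$ makes $\frac{C}{\ve|s|_h^{2B}\lambda_1}\le 1$; this is exactly \eqref{tech}. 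The same WLOG assumption is what upgrades the CTW error bound $|E|\le C$ to $|E|\le C\lambda_1^{1/4}$ via \eqref{PSbounds}. Your proposal invokes ``$\lambda_1\geq C/\ve^2$ makes lower-order terms $O(\ve)$'' and ``enlarge $B$ where necessary,'' but neither of these by itself absorbs the $|s|_h^{-2B}/\lambda_1$ factor; without the additional reduction $\lambda_1\geq C|s|_h^{-4B}$ the Cauchy--Schwarz step that produces $\frac{\ti{g}^{p\ov{p}}\ti{g}^{q\ov{q}}|V_1(\ti{g}_{p\ov{q}})|^2}{\lambda_1}$ on the right-hand side cannot close. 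Relatedly, the claim that lower-order contributions of size $O(|\de\vp_\gamma|_g)/\lambda_1$ become ``$O(\ve)$'' needs this same WLOG assumption, since by itself $\lambda_1\geq C/\ve^2$ and $|\de\vp_\gamma|_g\le C|s|_h^{-B}$ are not enough to conclude.
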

\begin{proof}
We omit the proof since it is almost identical to that of  \cite[Lemma 2.2]{CTW}.  There are only two small differences. First, the error term $E$ there contains terms of order $O(|\partial \varphi_{\gamma}|_g)$ and instead of $|E|\le C$ we have  $|E| \le C(\lambda_1)^{1/4}$ using (\ref{PSbounds}).  This change does not affect the rest of the proof. Second, since \eqref{db} here is weaker than \cite[(2.5)]{CTW}, we replace \cite[(2.21)]{CTW} by
\begin{equation} \label{tech}
\begin{split}
(1-\ve)\left(1+\frac{1}{\ve}\right)\sum_{i}\frac{2\tilde{g}^{i\ov{i}}}{\lambda_1^2} \left|\sum_{q}\ov{\nu_q}V_1(\ti{g}_{i\ov{q}})\right|^2 \le {} & \frac{C}{\ve |s|^{2B}_h\lambda_1} \sum_{i} \sum_{q} \frac{\tilde{g}^{i\ov{i}} \tilde{g}^{q\ov{q}}  |V_1 (\tilde{g}_{i\ov{q}})|^2}{\lambda_1} \\
\le {} &   \sum_{i} \sum_{q} \frac{\tilde{g}^{i\ov{i}} \tilde{g}^{q\ov{q}} |V_1 (\tilde{g}_{i\ov{q}})|^2}{\lambda_1},
\end{split}
\end{equation}
using that without loss of generality we may assume that $\lambda_1\geq C|s|_h^{-4B},$ which together with $\lambda_1\geq C/\ve^2$ gives $\frac{C}{\ve |s|^{2B}_h\lambda_1}\leq 1$. Now the rest of the proof goes through unchanged.
\end{proof}

We continue to prove the second order estimate. Using the inequality
\begin{equation*}
|(\varphi_{\gamma,\delta})_i|^2 \leq \frac{C}{|s|^{2B}_h},
\end{equation*}
which comes from \eqref{PSbounds} and the definition of $\vp_{\gamma,\delta}$, together with the fact that $\rho'' = 2 (\rho')^2$,
we combine Lemma \ref{lemmalbl1} and Lemma \ref{lemmauno} to obtain, increasing the uniform constant $C$ if necessary,
\begin{equation}\label{satan}
\begin{split}
0 \ge &\left(A-C\right)\sum_{i}\tilde{g}^{i\ov{i}}+\frac{|s|_h^{2B}}{2}\rho' \sum_k \tilde{g}^{i\ov{i}} (| (\varphi_{\gamma})_{ik}|^2 + | (\varphi_{\gamma})_{i\ov{k}}|^2)\\
&-4 \ve A^2 \tilde{g}^{i\ov{i}}|(\varphi_{\gamma, \delta})_i|^2 - An\\
\ge &\left(A-C-\frac{4C \ve A^2}{|s|_h^{2B}}\right)\sum_{i}\tilde{g}^{i\ov{i}}+\frac{|s|_h^{2B}}{2}\rho' \sum_k \tilde{g}^{i\ov{i}} (| (\varphi_{\gamma})_{ik}|^2 + |(\varphi_{\gamma})_{i\ov{k}}|^2)- An,
\end{split}
\end{equation}
as long as $\ve<\frac{1}{2}$ and $\lambda_1 \ge C/\ve^2$. We choose $A=\max(3C, 4B/\delta)$ and $\ve=\frac{|s|_h^{2B}(x_0)}{4A^2}$, where we recall that we needed $A \ge 4B/\delta$ for (\ref{upperQ}) above.
If at $x_0$ we have $\lambda_1 \leq C/\ve^2$, we get $\lambda_1\leq C|s|^{-4B}_h$, which is (\ref{goal}) and we are done.

Otherwise, $\lambda_1 \ge C/\ve^2$, then from \eqref{satan} we get
\begin{equation}\label{temp}
\begin{split}
0\geq {} & \sum_{i}\tilde{g}^{i\ov{i}}+\frac{|s|_h^{2B}}{2}\rho' \sum_k \tilde{g}^{i\ov{i}} (| (\varphi_{\gamma})_{ik}|^2 + | (\varphi_{\gamma})_{i\ov{k}}|^2)- An.
\end{split}
\end{equation}
This gives $\sum_{i}\tilde{g}^{i\ov{i}}\leq C$ at $x_0$, and using the elementary inequality
$$\sum_{i}\tilde{g}_{i\ov{i}}\leq \frac{1}{(n-1)!}\left(\sum_{j}\tilde{g}^{j\ov{j}}\right)^{n-1}\prod_k \ti{g}_{k\ov{k}},$$
together with the Monge-Amp\`ere equation \eqref{ma}, we see that $\ti{g}$ and $g$ are uniformly equivalent at $x_0$. From \eqref{temp} we then obtain
$$\frac{|s|_h^{2B}}{C} \sum_{i,k} (| (\varphi_{\gamma})_{ik}|^2 + | (\varphi_{\gamma})_{i\ov{k}}|^2)\leq C,$$
where we also used $\rho' \ge C^{-1}$ which follows from (\ref{proph}) and the first estimate of (\ref{PSbounds}).
By the definition of $\lambda_1$  we then  have $\lambda_1\leq C|s|_h^{-2B}$ at $x_0$, which again gives (\ref{goal}), as required.
 This completes the proof.
\end{proof}

\section{Geodesic rays from test configurations}\label{sectgeo}
In this section we explain how to derive Theorem \ref{thmgeo} from Theorem \ref{mainthm}.\\

Let $(X^n,\omega)$ be a compact K\"ahler manifold without boundary, and let $(\mathfrak{X},[\Omega])$ be a cohomological relatively K\"ahler test configuration for $(X,[\omega])$ in the sense of \cite{Dy} (see also \cite{DR}). The total space $\mathfrak{X}$ is a normal compact K\"ahler analytic space with a surjective map $\pi:\mathfrak{X}\to\mathbb{CP}^1$ with a $\mathbb{C}^*$-action on $\mathfrak{X}$ lifting the usual action on $\mathbb{CP}^1$, with a $\mathbb{C}^*$-equivariant isomorphism $\rho:\mathfrak{X}\backslash \mathfrak{X}_0\cong X\times(\mathbb{CP}^1\backslash \{0\})$, which defines a bimeromorphic map $\rho:\mathfrak{X}\dashrightarrow X\times\mathbb{CP}^1$.
The class $[\Omega]$ is a $\mathbb{C}^*$-invariant $(1,1)$ real Bott-Chern cohomology class, whose restriction to $\mathfrak{X}\backslash \mathfrak{X}_0$ is pulled back by $\rho^{-1}$ to $p_1^*[\omega]$ on $X\times(\mathbb{CP}^1\backslash\{0\})$, where $p_1:X\times\mathbb{CP}^1\to X$ is the projection, and which is $\pi$-K\"ahler in the sense that $[\Omega]+\pi^*[\eta]$ is K\"ahler on $\mathfrak{X}$ for some $S^1$-invariant K\"ahler class $[\eta]$ on $\mathbb{CP}^1$.
If we fix a smooth $S^1$-invariant representative $\Omega$ of $[\Omega]$, then by assumption there is an $S^1$-invariant K\"ahler form $\eta$ on $\mathbb{CP}^1$ such that $\Omega+\pi^*\eta$ is K\"ahler on $\mathfrak{X}.$

This definition is a transcendental generalization of Donaldson's definition \cite{D} of a test configuration $(\mathfrak{X},\mathfrak{L})$ for a polarized compact K\"ahler manifold $(X,L)$, and in particular the results of this section apply directly to such (usual) test configurations.

Using Hironaka's Theorem we obtain a $\mathbb{C}^*$-equivariant modification $\mu:\ti{\mathfrak{X}}\to\mathfrak{X}$ so that $\ti{\mathfrak{X}}$ is smooth and the indeterminacies of $\rho$ are resolved, with $\mu$ an isomorphism away from $\mathfrak{X}_0$, so that we have a bimeromorphic morphism $\rho:\ti{\mathfrak{X}}\to X\times\mathbb{CP}^1$ (using the same notation as before), and we also let $\ti{\pi}=\pi\circ\mu:\ti{\mathfrak{X}}\to\mathbb{CP}^1$. Let $\ti{\Omega}=\mu^*(\Omega+\pi^*\eta),$ a smooth semipositive form on $\ti{\mathfrak{X}}$. There is a $\mu$-exceptional effective integral divisor $E$ on $\ti{\mathfrak{X}}$, supported on $\ti{\mathfrak{X}}_0$, such that (with the same notation as earlier for $s$,$h$, etc.) for all sufficiently small $\delta>0$ we have that $\ti{\Omega}-\delta R_h$ is an $S^1$-invariant K\"ahler form on $\ti{\mathfrak{X}}$, and we have
$$\ddbar\log|s|^2_h=\llbracket E\rrbracket-R_h,$$
with $R_h$ and $\log |s|^2_h$ both $S^1$-invariant, where $\llbracket E\rrbracket$ denotes the current of integration along $E$.
Furthermore, since $(\ti{\mathfrak{X}},[\mu^*\Omega])$ is a smooth cohomological test configuration for $(X,[\omega])$, \cite[Proposition 3.8]{Dy} shows that on $\ti{\mathfrak{X}}$ we have
$$[\mu^*\Omega]=\rho^*p_1^*[\omega]+[D],$$
where $D$ as an $\mathbb{R}$-divisor on $\ti{\mathfrak{X}}$ supported on $\ti{\mathfrak{X}}_0$. By the Poincar\'e-Lelong equation there is a quasi-psh function $\psi_D$ on $\ti{\mathfrak{X}}$ (with logarithmic poles along the support of $D$ and smooth outside) which satisfies
$$\ddbar\psi_D=\llbracket D\rrbracket-R_D,$$
where $R_D$ is a smooth representative of $[D]$, and we may also assume that
$$\mu^*\Omega=\rho^*p_1^*\omega+\llbracket D\rrbracket-\ddbar\psi_D,$$
on $\ti{\mathfrak{X}}$, and that $R_D$ and $\psi_D$ are $S^1$-invariant.

Let $\Delta\subset \mathbb{CP}^1$ be the unit disc in the usual chart centered at $0\in\mathbb{CP}^1$, and denote by $\Delta^*$ the punctured disc.
 Choose a smooth $S^1$-invariant function $f$ on a neighborhood of $\Delta$ such that $\eta=\ddbar f$, and let $\vp_0$ be a given smooth function on $X$ such that $\omega+\ddbar\vp_0>0$. Let $\ti{\mathfrak{X}}_{\ov{\Delta}}$ be the preimage $\ti{\pi}^{-1}(\ov{\Delta})$, and similarly for $\ti{\mathfrak{X}}_\Delta$.
On $\ti{\mathfrak{X}}_{\ov{\Delta}}$ we have
$$\ti{\Omega}=\rho^*p_1^*\omega+\llbracket D\rrbracket+\ddbar(\ti{\pi}^*f-\psi_D),$$
and $\ti{\Omega}$ is a semipositive form which is a K\"ahler form away from $\ti{\mathfrak{X}}_0$. On $\de \ti{\mathfrak{X}}_{\ov{\Delta}}$ we let $\ti{\vp}_0=\rho^*p_1^*\vp_0+\psi_D-\ti{\pi}^*f$ (recall that $\rho$ induces an isomorphism $\de \ti{\mathfrak{X}}_{\ov{\Delta}}\cong X\times S^1$ and that $\de \ti{\mathfrak{X}}_{\ov{\Delta}}$ is Levi-flat). Note that on $\de\ti{\mathfrak{X}}_{\ov{\Delta}}$ we have
$$\ti{\Omega}+\ddbar\ti{\vp}_0=\mu^*\Omega+\ddbar(\ti{\pi}^*f+\ti{\vp}_0)=\rho^*p_1^*(\omega+\ddbar\vp_0)\geq 0,$$
and strictly positive in the $X$ directions. In order to apply Theorem \ref{mainthm} we have to find an extension of $\ti{\vp}_0$ to a smooth function $\hat{\vp}_0$ on $\ti{\mathfrak{X}}_{\ov{\Delta}}$ with $\ti{\Omega}+\ddbar\hat{\vp}_0\geq 0$. To achieve this we adapt the proof of \cite[Proposition 7.10]{Bo}: using the biholomorphism $\rho:\ti{\mathfrak{X}}_{\ov{\Delta}}\backslash \ti{\mathfrak{X}}_0\cong X\times \ov{\Delta}^*,$ we will work on this latter space. Let $U$ be an open neighborhood of $S^1\subset \ov{\Delta}$ in $\ov{\Delta}$ with a smooth retraction $r:U\to S^1$ and let $0\leq \chi\leq 1$ be a smooth function compactly supported on $U$ with $\chi\equiv 1$ in a neighborhood of $S^1$. Let $\psi(x,y)=\chi(y)\ti{\vp}_0(x,r(y))$ on $X\times U$, where $x\in X, y\in\ov{\Delta},$ and extend it to zero on all of $X\times \ov{\Delta}^*$. This function is smooth and strictly $\ti{\Omega}$-psh in the $X$ directions, for all $y$ in a neighborhood of $S^1$.
Using $\rho$ we obtain $\psi\circ\rho^{-1}$ on $\ti{\mathfrak{X}}_{\ov{\Delta}}\backslash \ti{\mathfrak{X}}_0$, which we extend by zero to all of $\ti{\mathfrak{X}}_{\ov{\Delta}}$, and we set $\hat{\vp}_0=\psi\circ\rho^{-1}+C\ti{\pi}^*u,$ where $u(z)=|z|^2-1$ on $\ov{\Delta}$, and $C$ is sufficiently large so that $\ti{\Omega}+\ddbar\hat{\vp}_0\geq 0$ on all of $\ti{\mathfrak{X}}_{\ov{\Delta}}$, as desired.

As in \cite[Section 2.4]{Be2}, we define the envelope $\Phi$ on $\ti{\mathfrak{X}}_{\ov{\Delta}}$ by
$$\Phi(x)=\sup\{u(x)\ |\ u\in PSH(\ti{\mathfrak{X}}_{\ov{\Delta}},\ti{\Omega}), \limsup_{z\to z_0}u(z)\leq \ti{\vp}_0(z_0) \textrm{ for all }z_0\in \de \mathfrak{X}_{\ov{\Delta}}\}.$$
In \cite[Proposition 2.7]{Be2} it is proved that $\Phi$ is an $S^1$-invariant locally bounded $\ti{\Omega}$-PSH function which satisfies $(\ti{\Omega}+\ddbar\Phi)^{n+1}=0$ on $\ti{\mathfrak{X}}_{\Delta}$ (in the sense of Bedford-Taylor), and the boundary values of $\Phi$ converge uniformly to $\ti{\vp}_0$.
In particular, $\ti{\Phi}=(\Phi-\psi_D+\ti{\pi}^*f)\circ\rho^{-1}$  satisfies
$$\rho^*(p_1^*\omega+\ddbar\ti{\Phi})=\ti{\Omega}+\ddbar\Phi,$$
on $\ti{\mathfrak{X}}_{\Delta}\backslash \ti{\mathfrak{X}}_{0}$, and so it
solves $(p_1^*\omega+\ddbar\ti{\Phi})^{n+1}=0$ on $X\times\Delta^*$ with boundary value $\vp_0$, and so this gives a weak geodesic ray $\vp_t,t\geq 0$ on $X$ by $\vp_t(z)=\ti{\Phi}(z,e^{-t})$, which starts at our given K\"ahler potential $\vp_0$.

Theorem \ref{mainthm} now applies directly, and shows that $\Phi\in C^{1,1}_{\rm loc}(\ti{\mathfrak{X}}_{\ov{\Delta}}\backslash \ti{\mathfrak{X}}_0)$, which translates to a $C^{1,1}_{\rm loc}(X\times\Delta^*)$ bound for the ray $\vp_t$. This proves Theorem \ref{thmgeo}.

Lastly, using the main result of \cite{Dy}, we note that the ray $\vp_t$ has the property \eqref{asympt} below, which is related to the asymptotic behavior of the Mabuchi energy (see also \cite{PS2,CTa} for earlier results along these lines). Fix $\delta>0$ sufficiently small, and for $0<\gamma\leq\frac{1}{2}$ let $\Omega^{(\gamma)}=\ti{\Omega}-\delta\gamma R_h$, which is a K\"ahler form on $\ti{\mathfrak{X}}_{\ov{\Delta}}$, which satisfies
$$[\Omega^{(\gamma)}]=\rho^*p_1^*[\omega]+[D]-\delta\gamma[E],$$
and also
$$\Omega^{(\gamma)}=\rho^*p_1^*\omega+\llbracket D\rrbracket-\delta\gamma\llbracket E\rrbracket+\ddbar(\ti{\pi}^*f-\psi_D+\delta\gamma\log|s|^2_h),$$
and the $\mathbb{R}$-divisor $D-\delta\gamma E$ is supported on $\ti{\mathfrak{X}}_0$. In particular we see that for $0<\gamma\leq \frac{1}{2}$, $(\ti{\mathfrak{X}},[\Omega^{(\gamma)}])$ is a smooth cohomological relatively K\"ahler test configuration for $(X,[\omega])$.

Let $\Phi_\gamma$ solve the analog of \eqref{Dp} in this setup, namely
\begin{equation} \label{Dp2}
\begin{split}
\left( \Omega^{(\gamma)} + \ddbar \Phi_{\gamma} \right)^n = {} & f_{\gamma} (\Omega^{(\gamma)})^n, \quad
  \Omega^{(\gamma)} + \ddbar \Phi_{\gamma}>  0, \\  \Phi_{\gamma}|_{\partial \ti{\mathfrak{X}}_{\ov{\Delta}}} = {} & (1-\gamma)\ti{\vp}_0,
 \end{split}
\end{equation}
where as before we define $$f_{\gamma} := \gamma^n \frac{(\ti{\Omega}-\delta R_h)^n}{(\Omega^{(\gamma)})^n}.$$
Then for $0<\gamma\leq\frac{1}{2}$, the function $\Phi_\gamma$ is smooth on $\ti{\mathfrak{X}}_{\ov{\Delta}}$, and as $\gamma\to 0$ we have that $\Phi_\gamma\to\Phi$ in $C^{1,1}_{\rm loc}(\ti{\mathfrak{X}}_{\ov{\Delta}}\backslash \ti{\mathfrak{X}}_0)$, and if we define $\ti{\Phi}_\gamma=(\Phi_\gamma-\psi_D+\delta\gamma\log|s|^2_h+\ti{\pi}^*f)\circ\rho^{-1},$ then
$$\rho^*(p_1^*\omega+\ddbar\ti{\Phi}_\gamma)=\Omega^{(\gamma)}+\ddbar\Phi_\gamma,$$
on $\ti{\mathfrak{X}}_{\Delta}\backslash \ti{\mathfrak{X}}_{0}$, and so it defines a ``subgeodesic ray'' $\vp_{\gamma,t}(z)=\ti{\Phi}_\gamma(z,e^{-t})$ on $X\times\Delta^*$ (i.e. $p_1^*\omega+\ddbar\ti{\Phi}_\gamma\geq 0$) which is smoothly compatible with $(\ti{\mathfrak{X}},[\Omega^{(\gamma)}])$, in the terminology of \cite{Dy}, which means that
$$\ti{\Phi}_\gamma\circ\rho+\psi_D-\delta\gamma\log|s|^2_h=\Phi_\gamma+\ti{\pi}^*f,$$
extends to a smooth function on all of $\ti{\mathfrak{X}}_{\Delta}$, which it obviously does.
We can then apply \cite[Theorem 5.5]{Dy} to get
$$\lim_{t\to\infty} M(\vp_{\gamma,t})/t=M^{\rm NA}(\ti{\mathfrak{X}},[\Omega^{(\gamma)}]),$$
where $M$ is the Mabuchi energy and $M^{\rm NA}$ is defined in \cite[Definition 3.13]{Dy},
and so
\begin{equation}\label{asympt}
\lim_{\gamma\to 0}\lim_{t\to\infty} M(\vp_{\gamma,t})/t=M^{\rm NA}(\ti{\mathfrak{X}},[\mu^*\Omega])=M^{\rm NA}(\mathfrak{X},[\Omega]).
\end{equation}
\begin{remark}
Note also that $\lim_{\gamma\to 0}\vp_{\gamma,t}=\vp_t$ in $C^{1,1}_{\rm loc}(X\times\Delta^*)$, however our estimates on $\vp_{\gamma,t}$ given by Theorem \ref{thmap} blow up rather fast as $t\to\infty$, and this does not seem sufficient to justify the exchange of the two limits above, and so the identity
\begin{equation}\label{sux}
\lim_{t\to\infty} M(\vp_{t})/t=M^{\rm NA}(\mathfrak{X},[\Omega]),
\end{equation}
remains conjectural (unless $\mathfrak{X}$ is smooth in which case there is no need to blow up and introduce the parameter $\gamma$, and then \eqref{sux} is given by \cite[Theorem 5.5]{Dy}).
\end{remark}
\section{Envelopes in nef and big classes}

In this section we prove
 Theorem \ref{thmenv}.  Assume we are in that setting, namely,
$(M, \omega)$ is compact K\"ahler (no boundary) and $\alpha$ is a closed real $(1,1)$ form such that $[\alpha]$ is nef  with $\int_M\alpha^n>0$. Let $u$ be the envelope
$$u(x)=\sup\{\vp(x)\ |\ \vp\in \textrm{PSH}(M,\alpha), \vp\leq 0\}.$$
Thanks to Demailly's regularization theorem \cite{Dem92,Dem94}, we have a K\"ahler current $T=\alpha+\ddbar\psi$ with analytic singularities along the Zariski closed set $E_{nK}(\alpha) \subset M$, such that $T\geq \delta\omega$ weakly on $M$, for some $\delta>0$.  We may assume without loss of generality that $\psi \le 0$.

We use the idea of Berman \cite{Be} (see also \cite{Lee} for a similar idea in the context of real Monge-Amp\`ere equations on domains in $\mathbb{R}^n$) who proved $C^{1,\gamma}_{\rm loc}(M\backslash E_{nK}(\alpha))$ regularity for all $0<\gamma<1$. The idea is the following: since $[\alpha]$ is nef, for every $\ve,\beta>0$ the result of Aubin and Yau \cite{A, Y} implies that we can find a smooth function $f_{\ve,\beta}$ satisfying
\begin{equation}\label{big and nef MA equ}
\alpha+\ve\omega+\ddbar f_{\ve,\beta}>0,\quad (\alpha+\ve\omega+\ddbar f_{\ve,\beta})^n=e^{\beta f_{\ve,\beta}}\omega^n,
\end{equation}
and as $\ve\to 0,\beta\to\infty$ the functions $f_{\ve,\beta}$ converge to the envelope $u$ (see the proof of Theorem \ref{thmenv} for details).

Our goal is then to prove $C^{1,1}_{\textrm{loc}}(M\backslash E_{nK}(\alpha))$ estimates for $f_{\ve, \beta}$,  independent of $\ve$ and $\beta$. First we have a lemma.
\begin{lemma}\label{le}
There exist uniform constants $B$, $C$ and $\beta_0$ independent of $\ve, \beta$ such that, for $\beta \ge \beta_0$,
\begin{enumerate}
\item[(i)]$\displaystyle{\psi-C\leq f_{\ve, \beta} \le C}$
\smallskip

\item[(ii)] $\displaystyle{\Delta_g f_{\ve, \beta} \le Ce^{-B\psi}}$
\smallskip

\item[(iii)] $\displaystyle{|\partial\fvebe|_{g}^{2}\leq Ce^{-B\psi}}$
\end{enumerate}
on $M\backslash E_{nK}(\alpha)$.
\end{lemma}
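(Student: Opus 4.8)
\textbf{Proof proposal for Lemma \ref{le}.}
The plan is to derive all three estimates by comparison arguments and maximum principle calculations applied to the equation \eqref{big and nef MA equ}, carefully tracking how constants depend on $\ve$ and $\beta$ but not letting them blow up. The main point is that the singular potential $\psi$ (equivalently $|s|_h$ in the earlier notation, where $E_{nK}(\alpha)$ plays the role of $E$) gives us enough room to absorb the degeneracy. First I would establish (i): the upper bound $f_{\ve,\beta}\le C$ follows from the maximum principle applied at the maximum of $f_{\ve,\beta}$, where $\ddbar f_{\ve,\beta}\le 0$ forces $e^{\beta f_{\ve,\beta}}\omega^n\le (\alpha+\ve\omega)^n\le C\omega^n$ (uniformly for $\ve\le 1$, say), giving $f_{\ve,\beta}\le \frac{C}{\beta}\le C$ for $\beta\ge\beta_0\ge 1$. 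For the lower bound, I would use $\psi$ as a barrier: since $T=\alpha+\ddbar\psi\ge\delta\omega$, the function $(1-t)\psi$ for small $t>0$ satisfies $\alpha+\ve\omega+\ddbar((1-t)\psi)\ge t\alpha+\ve\omega+(1-t)\delta\omega>0$, and one checks $(\alpha+\ve\omega+\ddbar((1-t)\psi))^n\ge ((1-t)\delta)^n\omega^n$. Then for the sub/supersolution comparison I would apply the maximum principle to $f_{\ve,\beta}-(1-t)\psi$: at its minimum, comparing Monge-Amp\`ere determinants gives $e^{\beta f_{\ve,\beta}}\omega^n\ge (\alpha+\ve\omega+\ddbar((1-t)\psi))^n\ge c_0\omega^n$ (using the concavity/monotonicity of $\det^{1/n}$), hence $f_{\ve,\beta}\ge \frac{1}{\beta}\log c_0\ge -C$ at that point, and so $f_{\ve,\beta}\ge (1-t)\psi-C\ge \psi-C$ everywhere (using $\psi\le 0$); choosing $t$ uniform small and fixed gives the claim.

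For (ii), the Laplacian estimate, I would run the standard Aubin--Yau second-order estimate in the version that is independent of $\inf f_{\ve,\beta}$, applied to a quantity of the form $\log\tr{\omega'}{(\alpha+\ve\omega+\ddbar f_{\ve,\beta})}-A f_{\ve,\beta}+$ (a term involving $\psi$), where $\omega'=\alpha+\ve\omega+\ddbar\psi_{\rm ref}$ for a suitable smoothing, or more directly I would work with $\omega+$ the K\"ahler current. The key trick, exactly as in Phong--Sturm \cite{PS3} and as recalled in Section \ref{sectiondeg}, is to replace $f_{\ve,\beta}$ with $f_{\ve,\beta}-(1-t)\psi$ so that the reference form becomes (a uniform multiple of) an honest K\"ahler form and the $\ddbar\psi$ contributes positivity; the bad term is the Laplacian of $\psi$, which is why the estimate degenerates like $e^{-B\psi}$ rather than staying bounded. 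Since $[\alpha]$ is only nef, the Ricci lower bound needed to control the third-order error terms comes from differentiating $\log\det$ in \eqref{big and nef MA equ}; the factor $e^{\beta f_{\ve,\beta}}$ has $\ddbar\log e^{\beta f_{\ve,\beta}}=\beta\ddbar f_{\ve,\beta}$, and the coefficient $\beta>0$ has the \emph{favorable} sign, so it only helps. Running the maximum principle and using (i) to control the zeroth-order terms (noting $\psi$ is bounded above and the singular part only enters with the right sign after the $-A(f_{\ve,\beta}-(1-t)\psi)$ correction) yields $\tr{\omega}{(\alpha+\ve\omega+\ddbar f_{\ve,\beta})}\le Ce^{-B\psi}$, hence $\Delta_g f_{\ve,\beta}\le Ce^{-B\psi}$.

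For (iii), the gradient estimate, I would apply the maximum principle to $|s_{nK}|^{2B}_h|\partial f_{\ve,\beta}|^2_g$ — more precisely to $\log(|s_{nK}|^{2B}_h|\partial f_{\ve,\beta}|^2_g)-Af_{\ve,\beta}+$ correction, or to a Blocki/Guan--type quantity $e^{-A(f_{\ve,\beta}-(1-t)\psi)}|s_{nK}|^{2B}_h|\partial f_{\ve,\beta}|^2_g$, where $s_{nK}$ is the defining section of $E_{nK}(\alpha)$ with $-\ddbar\log|s_{nK}|^2_h$ contributing to the K\"ahler current. Differentiating \eqref{big and nef MA equ} once produces $\partial_k$ of the equation; the third-order terms are absorbed as usual by the good third-order terms coming from $\Delta_{\omega'}|\partial f_{\ve,\beta}|^2$, the term $-A\,\Delta_{\omega'}f_{\ve,\beta}$ provides the dominant positive $\tr{\omega'}{\omega}$ contribution (using that $\omega'$ is comparable to a K\"ahler form after the $\psi$-shift and using (ii) to bound $\tr{\omega}{\omega'}$), and the derivative $\partial_k(e^{\beta f_{\ve,\beta}})=\beta(f_{\ve,\beta})_k e^{\beta f_{\ve,\beta}}$ again enters with a harmless sign. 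The weight $|s_{nK}|^{2B}_h$ is what lets the cutoff terms near $E_{nK}(\alpha)$ be controlled, at the cost of the $e^{-B\psi}$ growth. I expect the main obstacle to be bookkeeping: making sure every constant introduced — in particular $B$ — can be chosen uniformly in $\ve$ and $\beta$ and large enough to simultaneously dominate the errors in (ii) and (iii), and verifying that the shift by $(1-t)\psi$ genuinely converts the degenerate reference class into a uniformly K\"ahler one on compact subsets of $M\setminus E_{nK}(\alpha)$ without the constant $t$ or the Ricci-type error terms secretly depending on $\ve$. Once Lemma \ref{le} is in place, Theorem \ref{thmap} (with $\omega^{(\gamma)}$ replaced by $\alpha+\ve\omega$, $E$ by $E_{nK}(\alpha)$, and $F=\beta f_{\ve,\beta}$, whose upper bound, gradient bound and Hessian lower bound are exactly (i)--(iii) up to the factor $\beta$) delivers the $C^{1,1}_{\rm loc}$ estimate, and then passing to the limit $\ve\to 0$, $\beta\to\infty$ gives Theorem \ref{thmenv}, with \eqref{volform} following easily from the $C^{1,1}$ regularity as in \cite{To}.
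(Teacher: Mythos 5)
Your plan is correct in outline and follows the paper's proof quite closely: upper bound in (i) by the maximum principle, (ii) by a weighted Aubin--Yau Laplacian estimate, (iii) by a weighted B\l ocki/Phong--Sturm gradient estimate, with the weight built from $\psi$ (the paper's weights are of the form $e^{-B\ti{f}}$ with $\ti{f}=\fvebe-\psi$, which is exactly your ``$|s_{nK}|^{2B}_h$'' since $E_{nK}(\alpha)$ is not a divisor and $e^{\psi}$ plays the role of $|s|^2_h$). The one genuinely different ingredient is your lower bound in (i): the paper simply quotes \cite[Theorem 4.1]{BEGZ} to get $\fvebe\geq V_\ve-C\geq\psi-C$, whereas you run an explicit barrier comparison at the minimum of $\fvebe-(1-t)\psi$. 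This works and is more self-contained (the minimum is attained off $E_{nK}(\alpha)$ because $-\psi\to+\infty$ there, and the factor $e^{\beta \fvebe}$ is monotone in $\fvebe$ with the right sign), but note that $t\alpha$ need not be semipositive: you need $t\alpha\geq -Ct\omega$ absorbed into $(1-t)\delta\omega$, so $t$ must be fixed small compared to $\delta$ and $\sup_M|\alpha|_g$.

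Two places where your sketch is glibber than the actual mechanism, and which you should spell out. In (ii), the $\beta$-term does not simply ``only help'': writing $\Ric(\ti{\omega})=\Ric(\omega)-\beta\ti{\omega}+\beta(\alpha+\ve\omega)$, the piece $\beta(\alpha+\ve\omega)$ produces an unfavorable contribution of size $C\beta/\tr{\omega}{\ti{\omega}}$, and the argument closes only because at the maximum of $\log\tr{\omega}{\ti{\omega}}-B\ti{f}$ the favorable $+\beta$ (from $-\beta\ti{\omega}$) dominates it, forcing $\tr{\omega}{\ti{\omega}}\leq C$ there; this is precisely where uniformity in $\beta$ comes from, and treating $F=\beta\fvebe$ as a black-box right-hand side would fail, since the lower bound on $\Delta_g F$ degenerates like $-C\beta$ (so your final remark that Theorem \ref{thmap} applies directly with $F=\beta\fvebe$ is not accurate as stated; the paper instead reproves the Hessian bound in Lemma \ref{le2}, redoing the argument with $\psi$ in place of $\log|s|^2_h$ and using the structure of the $\beta$-terms). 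In (iii), besides the genuinely good term $+2\beta|\partial \fvebe|^2_g$ that you identify, the paper's proof needs the B\l ocki-type convex correction $h(s)=-Bs+\tfrac{1}{s-A+1}$ (a pure exponential weight leaves the $|\partial\ti{f}|^2_{\ti{g}}$ cross term unabsorbed), the bound $|\partial\psi|^2_g\leq Ce^{-C_1\psi}$ coming from the analytic singularities of $\psi$, and a final dichotomy on the sign of $\fvebe(x_0)$ using $\sum_i\ti{g}^{i\ov{i}}\geq ne^{-\beta \fvebe/n}$ to beat the $e^{-C_1\psi}$ error; your sketch leaves these absorption steps unspecified, so make sure your ``correction'' term supplies the convexity and that the $e^{-C_1\psi}$ error is controlled by $\beta+\sum_i\ti{g}^{i\ov{i}}$.
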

\begin{proof}
Parts (i) and (ii) are due to Berman \cite{Be}.  For convenience of the reader, we include the short proofs here.
For (i), the maximum principle immediately gives $\beta\sup_M f_{\ve,\beta}\leq C,$ independent of $\ve,\beta$. Then \cite[Theorem 4.1]{BEGZ} gives $f_{\beta,\ve}\geq V_{\ve}-C,$ with a uniform $C$, where $V_\ve$ is any fixed $(\alpha+\ve\omega)$-PSH function with minimal singularities. By definition of minimal singularities we get
\begin{equation}\label{min}
f_{\ve, \beta}\geq V_\ve-C\geq \psi-C,
\end{equation}
which establishes (i). Define
$$\ti{f}_{\ve, \beta} = f_{\ve, \beta} - \psi,$$
which is uniformly bounded from below.  To simplify notation, we will write $f$ for $\fvebe$ and $\ti{f}$ for $\tifvebe$.
Define $$\ti{\omega}=\alpha+\ve\omega+\ddbar f.$$
From (\ref{big and nef MA equ}),
$$\Ric(\ti{\omega})=\Ric(\omega)-\beta\ddbar f=\Ric(\omega)-\beta\ti{\omega}+\beta(\alpha+\ve\omega).$$
Using the well-known estimate of \cite{A,Y}, we have for a uniform $C$,
\[\begin{split}
\Delta_{\ti{g}}\log\tr{\omega}{\ti{\omega}}&\geq\frac{1}{\tr{\omega}{\ti{\omega}}}\left(-C(\tr{\ti{\omega}}{\omega})(\tr{\omega}{\ti{\omega}})-\tr{\omega}{\Ric(\ti{\omega})}\right)\\
&\geq -C\tr{\ti{\omega}}{\omega}+\beta-\frac{C\beta}{\tr{\omega}{\ti{\omega}}},
\end{split}\]
since $\beta$ may be assumed to be large. Now note that
\begin{equation} \label{fti}
-\ddbar \ti{f}=-\ti{\omega}+\alpha+\ve\omega+\ddbar\psi\geq-\ti{\omega}+\delta\omega,
\end{equation}
on $M\backslash E_{nK}(\alpha)$, and so on this set we have
$$\Delta_{\ti{g}}(\log\tr{\omega}{\ti{\omega}}-B\ti{f})\geq\frac{\beta}{2}-\frac{C\beta}{\tr{\omega}{\ti{\omega}}},$$
 for $\beta$ large as long as $B \ge C/\delta$. The maximum of this quantity cannot be achieved on $E_{nK}(\alpha)$, and so at a maximum point we get
$$\frac{\beta}{2}-\frac{C\beta}{\tr{\omega}{\ti{\omega}}}\leq 0,$$
namely $\tr{\omega}{\ti{\omega}}\leq C$.  From \eqref{min} we obtain at this point $\log\tr{\omega}{\ti{\omega}}-B\ti{f} \leq C$, and so this is true everywhere.  Since $f$ is uniformly bounded from above we have
$$\tr{\omega}{\ti{\omega}}\leq Ce^{-B\psi},$$
giving (ii).

For (iii), we consider the quantity
\begin{equation*}
Q=e^{h(\ti{f})}|\partial f|_{g}^{2},
\end{equation*}
where $h$ is defined by (cf. \cite{PS3})
\begin{equation*}\label{defnh}
h(s)=-Bs+\frac{1}{s-A+1},
\end{equation*}
for  $A=\inf_{M}\ti{f}$ (recall that $\ti{f}$ is uniformly bounded from below) and $B$ a uniform positive constant to be determined later.  Note that $h=h(\ti{f})$ is uniformly bounded from above, satisfies $h'<0$ and $h''>0$, and $h(\ti{f})$ differs from $-B\ti{f}$ at most by $1$.

It is sufficient to show that $Q$ is bounded from above by a uniform constant $C$, since then
$$| \partial f|^2_g \le C e^{-h} \le Ce^{B \ti{f}} \le  C' e^{-B\psi},$$
for $C'$ uniform, since $f$ is uniformly bounded from above.

Suppose $Q$ achieves a maximum at $x_0$, which must be a point of $M \setminus E_{nK}(\alpha)$.
We choose holomorphic normal coordinates centered at $x_{0}$ such that $(g_{i\ov{j}})$ is the identity and $(\tilde{g}_{i\ov{j}})$ is diagonal at $x_{0}$. At $x_{0}$, we have
\begin{equation}\label{big and nef gradient equ 1}
0\ge \Delta_{\tilde{g}}Q  = |\partial f |_{g}^{2}\Delta_{\tilde{g}}(e^{h})+e^{h}\Delta_{\tilde{g}}(|\partial f |_{g}^{2}) +2\mathrm{Re}\left(\tilde{g}^{i\overline{i}}\partial_{i}(e^{h})\partial_{\overline{i}}(|\partial f |_{g}^{2})\right).
\end{equation}
For the first term on the right hand side of (\ref{big and nef gradient equ 1}), using (\ref{fti}) and recalling that $h'<0$,
\begin{equation}\label{big and nef gradient equ 2}
\begin{split}
 |\partial f |_{g}^{2}\Delta_{\tilde{g}}(e^{h})\geq {} & e^{h}\left((h')^{2}+h''\right)|\partial f |_{g}^{2}|\partial\ti{f}|_{\tilde{g}}^{2}
+ne^{h}h'|\partial f |_{g}^{2} \\ {} & -e^{h}h'\delta|\partial f |_{g}^{2}\sum_{i}\tilde{g}^{i\overline{i}}.
\end{split}
\end{equation}
For the second term of (\ref{big and nef gradient equ 1}), writing $\alpha = \sqrt{-1} \alpha_{i\ov{j}} dz^i \wedge d\ov{z}^j$, we obtain for a uniform constant $C$,
\begin{equation}\label{big and nef gradient equ 3}
\begin{split}
e^{h}\Delta_{\tilde{g}}(|\partial f |_{g}^{2}) = {} & e^{h}\sum_{k}\tilde{g}^{i\overline{i}}(|f_{ik}|^{2}+|f_{i\overline{k}}|^{2}) + e^h \tilde{g}^{i\ov{i}} R_{i\ov{i}}^{\ \ \, \ov{\ell} k} \partial_k f \partial_{\ov{\ell}} f\\
{} & + 2 e^h \textrm{Re} \left( \sum_k \tilde{g}^{i\ov{i}} \partial_k (\tilde{g}_{i\ov{i}} - \alpha_{i\ov{i}} - \ve g_{i\ov{i}}) \partial_{\ov{k}} f \right) \\
\geq {} & e^{h}\sum_{k}\tilde{g}^{i\overline{i}}(|f_{ik}|^{2}+|f_{i\overline{k}}|^{2})-Ce^{h}|\partial f |_{g}^{2}\sum_{i}\tilde{g}^{i\overline{i}} \\
{} &  + 2 \beta e^h | \partial f|^2_g,
\end{split}
\end{equation}
where we applied $\partial_k$ to the equation (\ref{big and nef MA equ}).  We also assumed in (\ref{big and nef gradient equ 3}), without loss of generality, that $|\partial f|^2_g \ge 1$.

For the third term of (\ref{big and nef gradient equ 1}) we compute
\begin{equation}\label{big and nef gradient equ 4}
\begin{split}
    & 2\mathrm{Re}\left(\tilde{g}^{i\overline{i}}\partial_{i}(e^{h})\partial_{\overline{i}}(|\partial f|_{g}^{2})\right)\\
= ~ & 2\mathrm{Re}\left( \sum_k \tilde{g}^{i\overline{i}}e^{h}h' \ti{f}_{i} f_{\overline{k}} f_{\overline{i}k}\right)
+2\mathrm{Re}\left( \sum_k \tilde{g}^{i\overline{i}}e^{h}h' \ti{f}_{i} f_{k} f_{\overline{i}\overline{k}}\right).
\end{split}
\end{equation}
Next, by the Cauchy-Schwarz and Young's inequalities, we obtain
\begin{equation}\label{big and nef gradient equ 5}
\begin{split}
  &2\mathrm{Re}\left(\sum_k \tilde{g}^{i\overline{i}}e^{h}h' \ti{f}_i f_{\overline{k}} f_{\overline{i}k}\right)\\
=~&2\mathrm{Re}\left(\sum_k \tilde{g}^{i\overline{i}}e^{h}h' \ti{f}_i f_{\overline{k}}(\tilde{g}_{k\overline{i}}-\alpha_{k\overline{i}}-\ve g_{k\ov{i}})\right)\\
\geq ~ & 2e^{h}h' \mathrm{Re}\sum_i \ti{f}_i f_{\overline{i}}+ \frac{C}{\delta} e^{h}h'|\partial \ti{f}|_{\tilde{g}}^{2}
+\frac{\delta}{2} e^{h}h' |\partial f |_{g}^{2}\sum_{i}\tilde{g}^{i\overline{i}}\\
\geq ~ & 3e^{h}h'|\partial f |_{g}^{2}+e^{h}h'|\partial\psi|_{g}^{2}+ \frac{C}{\delta} e^{h}h'|\partial\ti{f}|_{\tilde{g}}^{2}
+\frac{\delta}{2} e^{h}h' |\partial f |_{g}^{2}\sum_{i}\tilde{g}^{i\overline{i}},
\end{split}
\end{equation}
where we used $h'<0$ and $\ti{f}=f-\psi$. On the other hand, we see that
\begin{equation}\label{big and nef gradient equ 6}
\begin{split}
&2\mathrm{Re}\left(\sum_k \tilde{g}^{i\overline{i}}e^{h}h' \ti{f}_i f_{k} f_{\overline{i}\overline{k}}\right)\\
\geq~& -e^{h}\sum_{k}\tilde{g}^{i\overline{i}}| f_{ik}|^{2}-e^{h}(h')^{2}|\partial f |_{g}^{2}|\partial \ti{f}|_{\tilde{g}}^{2}.
\end{split}
\end{equation}
Now the fact that $\psi$ has analytic singularities means that there is $\gamma\in\mathbb{R}_{>0}$ such that locally near every point of $M$ we can write $\psi=\gamma\log\sum_{j=1}^N |f_j|^2+v$, where $v$ is smooth and the $f_j$'s are local holomorphic functions (whose common zero locus locally cuts out $E_{nK}(\alpha)$). This implies that $e^{\frac{\psi}{\gamma}}$ is smooth on all of $M$ (in particular, its gradient squared is globally bounded), and so there is a uniform constant $C_1>0$ such that
\begin{equation} \label{psi}
| \partial \psi|^2_g \le C e^{-C_1 \psi},
\end{equation}
holds on $M\backslash E_{nK}(\alpha)$.
Combining (\ref{big and nef gradient equ 1}), (\ref{big and nef gradient equ 2}), (\ref{big and nef gradient equ 3}), (\ref{big and nef gradient equ 4}), (\ref{big and nef gradient equ 5}), (\ref{big and nef gradient equ 6}) and (\ref{psi}), at $x_{0}$, we have
\begin{equation}\label{big and nef gradient equ 7}
\begin{split}
0 & \geq  e^{h}h''|\partial f |_{g}^{2}|\partial\ti{f}|_{\tilde{g}}^{2}
+\left(-\frac{\delta}{2} h'-C_{0}\right)e^{h}|\partial f |_{g}^{2}\sum_{i}\tilde{g}^{i\overline{i}}\\
& \quad + e^{h}(C_0h' + 2\beta) |\partial f |_{g}^{2}+ C_0e^{h}h' e^{-C_1\psi}+\frac{C_0}{\delta} e^{h}h'|\partial\ti{f}|_{\tilde{g}}^{2},
\end{split}
\end{equation}
for uniform constants $C_{0}, C_1$.

We now choose the constant $B$:
$$B=\max\left(\frac{2C_{0}+2}{\delta},C_1\right).$$   For $\beta \ge C_0(B +1)\ge1$ we obtain from (\ref{big and nef gradient equ 7}) and the definition of $h$,
\begin{equation} \label{yacasi}
\begin{split}
0 & \geq  \frac{2}{(\ti{f}-A+1)^{3}}|\partial f |_{g}^{2}|\partial\ti{f}|_{\tilde{g}}^{2}
+|\partial f |_{g}^{2}\sum_{i}\tilde{g}^{i\overline{i}}\\
& \quad  + \beta |\partial f|_{g}^{2}-Ce^{-C_1\psi}- C |\partial\ti{f}|_{\tilde{g}}^{2},
\end{split}
\end{equation}
for a uniform constant $C$.  We may assume without loss of generality that at $x_0$,
$$|\partial f|^2_g \ge C (\ti{f} - A +1)^3,$$
since if not then  $Q$ is bounded from above at $x_0$.

Hence we obtain from (\ref{yacasi}),
$$| \partial f|^2_g \le \frac{Ce^{-C_1 \psi}}{\beta + \sum_i \tilde{g}^{i\ov{i}}}.$$
If at $x_0$ we have $f(x_0)>0$ then we obtain
$$|\partial f|^2_g \le Ce^{-C_1\psi} \le Ce^{C_1\ti{f}}\leq Ce^{B\ti{f}} \le C e^{-h},$$
for a uniform $C$
and we are done.

Otherwise, $f(x_0)\le 0$.  From the equation (\ref{big and nef MA equ}) and the arithmetic-geometric means inequality,
$$\sum_i \tilde{g}^{i\ov{i}} \ge n \left( \frac{\det g}{\det \ti{g}} \right)^{1/n} = n e^{-\frac{\beta}{n} f},$$
and hence, as long as $\beta \ge nB$, $$|\partial f|^2_g \le C e^{-C_1 \psi + \frac{\beta}{n} f} \le Ce^{-B\psi+B\ti{f}}=C e^{B \ti{f}} \le C e^{-h},$$ as required.
\end{proof}
\begin{remark}
The gradient estimate in part (iii) is analogous to the Phong-Sturm adaptation \cite{PS3} of B\l ocki's gradient estimate \cite{Bl2} to the degenerate case. B\l ocki's estimate was also used in \cite{DaR} along Berman's path of Monge-Amp\`ere equations, in the simpler case of a K\"ahler class. However the fact that we have both a degenerate class, as well as the parameter $\beta$, makes the details of our gradient estimate different from all of these references.
\end{remark}

We now prove a bound on the real Hessian of $\fvebe$.

\begin{lemma}\label{le2}
There exist uniform constants $B$ and $C$ such that
\begin{equation*}
|\nabla^{2}\fvebe|_{g}\leq Ce^{-B\psi}
\end{equation*}
on $M\backslash E_{nK}(\alpha)$, where $\nabla$ is the Levi-Civita connection of $\omega$.
\end{lemma}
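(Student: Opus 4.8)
The plan is to run a maximum principle argument on a test quantity closely modeled on the one used in the proof of Theorem \ref{thmap}, but now adapted to the setting of $f=\fvebe$ with the degeneration governed by $\psi$ (and its analytic singularities along $E_{nK}(\alpha)$) in place of $|s|_h^2$, and with the extra parameter $\beta$. Concretely, I would apply the maximum principle to
$$Q = \log \lambda_1(\nabla^2 f) + \rho(e^{B\psi}|\partial f|_g^2) + h(\ti f),$$
where $\lambda_1(\nabla^2 f)$ is the largest eigenvalue of the real Hessian of $f$ with respect to $g$, $\ti f = f - \psi$, $\rho$ is a concave cutoff of the form $\rho(\tau)=-\tfrac12\log(1+\sup_M(e^{B\psi}|\partial f|_g^2)-\tau)$ as in \eqref{defnrho}, and $h$ is essentially the barrier $h(s)=-As+\tfrac{1}{s-A_0+1}$ from the proof of Lemma \ref{le} (with $A$ large, to be fixed) — the role of $-A\varphi_{\gamma,\delta}$ in Theorem \ref{thmap} being played here by $h(\ti f)$, since $-\ddbar\ti f \ge -\ti\omega+\delta\omega$ by \eqref{fti}, which is exactly the inequality that makes the $h''|\partial f|_g^2|\partial\ti f|_{\ti g}^2$ term and the $A\sum_i\ti g^{i\bar i}$ term appear with good signs. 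As in Theorem \ref{thmap}, I would first replace $\lambda_1(\nabla^2 f)$ by $\lambda_1(\Phi)$ for the perturbed endomorphism $\Phi^\alpha_{\ \beta}=g^{\alpha\sigma}\nabla^2_{\sigma\beta}f - g^{\alpha\sigma}B_{\sigma\beta}$ to reduce to the case where the top eigenvalue is simple, and work in holomorphic normal coordinates at the maximum point $x_0$, which necessarily lies in $M\setminus E_{nK}(\alpha)$.

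The key computational steps are: (1) a lower bound for $\Delta_{\ti g}(\rho(e^{B\psi}|\partial f|_g^2))$ analogous to Lemma \ref{lemmafo} — here I use that $\psi$ has analytic singularities, so $e^{B\psi}$ is Lipschitz-controlled in the sense that $\ddbar e^{B\psi}\ge e^{B\psi}\ddbar(B\psi)=Be^{B\psi}(\ddbar\psi)\ge Be^{B\psi}(-\alpha-\ve\omega+\delta\omega)\ge -Ce^{B\psi}\omega$ on $M\setminus E_{nK}(\alpha)$, and that $e^{B\psi}|\partial f|_g^2$ and $e^{B\psi}|\partial\psi|_g^2$ are bounded via part (iii) of Lemma \ref{le} and \eqref{psi} (after increasing $B$); (2) a lower bound for $\Delta_{\ti g}\log\lambda_1(\Phi)$ exactly as in Lemma \ref{lemmalbl1}, using $V_1V_1$ applied to $\log\det\ti g = \beta f + \log\det g - \beta\psi + (\text{smooth})$ — note the term $\beta V_1V_1(f)$ now needs care since $\nabla^2 f$ is what we are estimating, but $V_1V_1(\partial_i\partial_{\bar i}f) = V_1V_1(\ti g_{i\bar i}) + O(1)$ absorbs it harmlessly, just as in Theorem \ref{thmap}; (3) the third-order bad-term cancellation as in Lemma \ref{lemmauno}, which goes through provided we first reduce (without loss of generality, as there) to $\lambda_1 \ge C e^{-2B\psi}$ and $\lambda_1 \ge C/\ve^2$, so that the weakness of the Laplacian bound $\tr{\omega}{\ti\omega}\le Ce^{-B\psi}$ from Lemma \ref{le}(ii) is again compensated exactly as in \eqref{tech}; (4) combining everything and choosing $A$ large (depending only on background data) and $\ve = e^{B\psi(x_0)}/(CA^2)$ to kill the term $\ve A^2 e^{B\psi}|\partial\ti f|_{\ti g}^2$ against $A\sum_i\ti g^{i\bar i}$, concluding as in Theorem \ref{thmap} that either $\lambda_1(x_0)\le C/\ve^2 \le Ce^{-2B\psi}$ directly, or else $\sum_i\ti g^{i\bar i}\le C$ at $x_0$, which by the Monge-Amp\`ere equation \eqref{big and nef MA equ} (together with the upper bound $f\le C$ and the elementary inequality $\sum_i\ti g_{i\bar i}\le \tfrac{1}{(n-1)!}(\sum_j\ti g^{j\bar j})^{n-1}\prod_k\ti g_{k\bar k}$) forces $\ti g$ and $g$ uniformly equivalent at $x_0$ and hence $\lambda_1(x_0)\le Ce^{-2B\psi(x_0)}$. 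In all cases $Q$ is bounded above at $x_0$ by a uniform constant, and since $h(\ti f)$ and $\rho$ are uniformly bounded and $h(\ti f)$ differs from $-A\ti f = -Af+A\psi$ by at most $1$, we get $\log\lambda_1 - A\psi \le C$, i.e. $\lambda_1 \le Ce^{A\psi}\le C$ on $\{\psi\le 0\}$ — wait, that is the wrong direction; rather one gets $\lambda_1 e^{A\psi} \le C$, hence $|\nabla^2 f|_g = \lambda_1(\nabla^2 f) \le \lambda_1(\Phi)+C \le Ce^{-A\psi}$ everywhere on $M\setminus E_{nK}(\alpha)$, which is the claimed bound with the final $B$ taken to be this $A$.

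The main obstacle I anticipate is bookkeeping the interaction between the parameter $\beta$ and the degeneration: in step (2) the Ricci-type term produces $+\beta(\alpha+\ve\omega)$ rather than a clean sign, and we must make sure the $\beta$-dependent terms either have favorable sign or are dominated, uniformly in $\beta\to\infty$ and $\ve\to 0$ — the lower bound $\sum_i\ti g^{i\bar i}\ge ne^{-\beta f/n}$ and the upper bound on $f$ are what save us, exactly as in the endgame of Lemma \ref{le}(iii). A secondary subtlety is justifying the $\ddbar e^{B\psi}$ estimate and the bounds $e^{B\psi}|\partial f|_g^2\le C$, $e^{B\psi}|\partial\psi|_g^2\le C$ on all of $M\setminus E_{nK}(\alpha)$ uniformly up to the singular locus, which requires choosing $B$ larger than the constants $B$, $C_1$ appearing in Lemma \ref{le} and in \eqref{psi}; once $B$ is fixed this way, the structure is identical to Theorem \ref{thmap} with $|s|_h^{2B}$ replaced by $e^{B\psi}$ and $\varphi_{\gamma,\delta}$ replaced by $\ti f$, and the argument closes as above.
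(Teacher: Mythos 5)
Your strategy is the same as the paper's: the paper proves Lemma \ref{le2} by rerunning the proof of Theorem \ref{thmap} with the dictionary $\vp_\gamma\to\fvebe$, $\vp_{\gamma,\delta}\to\tifvebe$, $\omega^{(\gamma)}\to\alpha+\ve\omega$, $\log|s|^2_h\to\psi$, applying the maximum principle to $\log\lambda_1(\nabla^2 f)+\rho(e^{B\psi}|\de f|^2_g)-A\ti{f}$ (your $h(\ti{f})$ differs from $-A\ti{f}$ by at most $1$, a harmless variant), with the same perturbation of $\lambda_1$, the same analogues of Lemmas \ref{lemmafo}--\ref{lemmauno}, and the same dichotomy at the maximum point. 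Two points in your write-up, however, are not correct as stated, and the second one is exactly where this lemma is not a mechanical repetition of Theorem \ref{thmap}.

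First, a slip: the equation is $\log\det\ti{g}=\beta f+\log\det g$; there is no ``$-\beta\psi+(\text{smooth})$'' term, and it is good that there isn't, since a term $\beta V_1V_1(\psi)$ could not be controlled uniformly in $\beta$. The genuine reason the $\beta$-term is harmless is its sign: at $x_0$ one has $V_1V_1(f)=\lambda_1\geq 0$, so $\beta V_1V_1(f)\geq 0$ may simply be dropped (this is how the paper argues in the analogue of \eqref{big and nef second equ 2}), not an ``absorption'' into $V_1V_1(\ti{g}_{i\ov{i}})$.

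Second, the endgame. You assert that ``in all cases $Q$ is bounded above at $x_0$ by a uniform constant'' because ``$h(\ti{f})$ and $\rho$ are uniformly bounded.'' But $\ti{f}=f-\psi$ is only uniformly bounded \emph{below}: since $f\leq C$ and only $f\geq\psi-C$, one has $\ti{f}\leq C-\psi$, so $h(\ti{f})$ is unbounded below near $E_{nK}(\alpha)$. More to the point, the mechanism of \eqref{upperQ} does not transpose: in Theorem \ref{thmap} the two-sided bound on $\vp_\gamma$ gives $-A\vp_{\gamma,\delta}(x_0)\leq C+\tfrac{A\delta}{2}\log|s|^2_h(x_0)$, and this negative term cancels the $-2B\log|s|^2_h(x_0)$ blow-up in \eqref{goalold}. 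Here $-A\ti{f}(x_0)=-Af(x_0)+A\psi(x_0)\leq AC$ because the only lower bound $f\geq\psi-C$ eats the compensating $A\psi(x_0)$; so from $\lambda_1(x_0)\leq Ce^{-2B\psi(x_0)}$ alone you only get $Q(x_0)\leq C-2B\psi(x_0)$, which is not uniform, and the global bound does not follow. One needs extra input specific to Berman's equation, in the spirit of the end of the proof of Lemma \ref{le}.(iii): in the branch where the analogue of \eqref{temp} gives $\sum_i\ti{g}^{i\ov{i}}(x_0)\leq C$, the arithmetic-geometric means inequality and \eqref{big and nef MA equ} yield $e^{-\beta f(x_0)/n}\leq C$, hence $f(x_0)\geq -C$, which restores the compensation (and note that the metric-equivalence step there needs $e^{\beta f}\leq C$, i.e. $\beta\sup_M f\leq C$ from the maximum principle as in Lemma \ref{le}.(i), not merely $f\leq C$); in the other branch, where you conclude $\lambda_1(x_0)\leq Ce^{-2B\psi(x_0)}$ directly from the threshold, no lower bound on $f(x_0)$ is available, and one should modify the test quantity, e.g.\ replace $\log\lambda_1$ by $\log\!\left(e^{2B\psi}\lambda_1\right)$, which costs only $2B\Delta_{\ti{g}}\psi\geq -C\sum_i\ti{g}^{i\ov{i}}$ plus gradient terms already controlled by \eqref{psi} and Lemma \ref{le}.(iii); then both branches give $Q(x_0)\leq C$ and the bound $\lambda_1\leq Ce^{-(A+2B)\psi}$ follows from $f\leq C$ as you indicate. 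This is the step the paper compresses into ``the rest of the arguments of Theorem \ref{thmap}''; as written, your proposal glosses it with an incorrect justification, and it needs the repair above to close.
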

\begin{proof}
The proof is very similar to that of Theorem \ref{thmap}, replacing $\varphi_{\gamma}$, $\varphi_{\gamma,\delta}$, $\omega^{(\gamma)}$ and $\log|s|_{h}^{2}$ by $\fvebe$, $\tifvebe$, $\alpha+\ve\omega$ and $\psi$ (note however that while $\omega^{(\gamma)}$ is K\"ahler, $\alpha+\ve\omega$ may not be).  For the reader's convenience we give here a brief sketch.

Again, we will write $f$ and $\ti{f}$ for $\fvebe$ and $\tifvebe$ respectively.  Let $B$ be a uniform constant at least as large as that of Lemma \ref{le}.
Consider the quantity
$$Q = \log \lambda_1( \nabla^2 f) + \rho(e^{B\psi} | \partial f |^2_g) - A\ti{f},$$
where, analogous to (\ref{defnrho}), the function $\rho$ is given by
\begin{equation} \label{rho2}
\rho(\tau) = - \frac{1}{2} \log (1+ \sup_M (e^{B\psi}|\partial f|^2_g)  - \tau).
\end{equation}

Assume that $Q$ achieves a maximum at $x_{0}$, which must be in $M \setminus E_{nK}(\alpha)$. As in Section \ref{sectiondeg},  choose normal holomorphic coordinates centered at $x_0$.  By the same calculation as (\ref{big and nef gradient equ 3}), and using the result of Lemma \ref{le}.(iii),
\[
\begin{split}
\Delta_{\tilde{g}} ( | \partial f|^2_g)
\ge {} & \sum_k \tilde{g}^{i\ov{i}} (| f_{ik}|^2+ | f_{i\ov{k}}|^2) - C e^{-B\psi} \sum_i \tilde{g}^{i\ov{i}}.
\end{split}
\]
Combining this with
$$\ddbar(e^{B\psi})\geq Be^{B\psi}\ddbar\psi\geq -Be^{B\psi}\alpha \geq -Ce^{B\psi}\omega,$$
with \eqref{psi} and the argument of \eqref{ggg}, we obtain
$$\Delta_{\tilde{g}}(e^{B\psi} | \partial f |^2_g)
\geq \frac{e^{B\psi}}{2} \sum_k \tilde{g}^{i\ov{i}} (| f_{ik}|^2+ | f_{i\ov{k}}|^2)   - C \sum_i \tilde{g}^{i\ov{i}},$$
after possibly increasing $B$ so that $e^{(B-C_1)\psi}|\de f|^2_g\leq C$.
Hence
\begin{equation}\label{equJ2}
\begin{split}
\Delta_{\tilde{g}}(\rho(e^{B\psi} | \partial f |^2_g))
\geq & ~ \frac{\rho'}{2}e^{B\psi} \sum_k \tilde{g}^{i\ov{i}} (| f_{ik}|^2+ | f_{i\ov{k}}|^2)
\\ & +\rho''\tilde{g}^{i\overline{i}}|\partial_{i}(e^{B\psi} | \partial f |^2_g)|^{2}
  -C\sum_{i}\tilde{g}^{i\overline{i}},
\end{split}
\end{equation}
which is the analog of Lemma \ref{lemmafo}.

We use the same perturbation argument as in the proof of Theorem \ref{thmap},  writing now $\lambda_1$ for the largest eigenvalue of the appropriate perturbed quantity.
Applying $V_{1}V_{1}$ to the logarithm of (\ref{big and nef MA equ}), at $x_{0}$, we see that
\begin{equation}\label{big and nef second equ 2}
\begin{split}
\tilde{g}^{i\overline{i}}(V_{1}V_{1}(\tilde{g}_{i\overline{i}})) & = g^{p\overline{p}}g^{q\overline{q}}|V_{1}(\tilde{g}_{p\overline{q}})|^{2}
+V_{1}V_{1}(\log\det g)+V_{1}V_{1}(\beta f)\\
& \geq g^{p\overline{p}}g^{q\overline{q}}|V_{1}(\tilde{g}_{p\overline{q}})|^{2}-C,
\end{split}
\end{equation}
where we used $V_{1}V_{1}(f)=\lambda_{1}$. As in Lemma \ref{lemmalbl1}, at $x_0$ this gives
$$\Delta_{\ti{g}} \lambda_1 \ge 2\sum_{\alpha>1} \tilde{g}^{i\ov{i}} \frac{| \partial_i (f_{V_{\alpha} V_1}) |^2}{\lambda_1-\lambda_{\alpha}} + \tilde{g}^{p\ov{p}} \tilde{g}^{q\ov{q}} |V_1 (\tilde{g}_{p\ov{q}})|^2 - C\lambda_1 \sum_i \tilde{g}^{i\ov{i}}.$$
Using (\ref{fti}) to obtain $-\Delta_{\ti{g}} (A \ti{f}) \ge A\delta \sum_i \tilde{g}^{i\ov{i}} -An$, we have
\begin{equation} \label{LhatQ4d}
\begin{split}
0
\ge {} & 2 \sum_{\alpha >1}  \frac{\tilde{g}^{i\ov{i}} |\partial_i (f_{V_{\alpha} V_1})|^2}{\lambda_1(\lambda_1-\lambda_{\alpha})} + \frac{\tilde{g}^{p\ov{p}} \tilde{g}^{q\ov{q}} | V_1(\tilde{g}_{p\ov{q}})|^2}{\lambda_1} - \frac{\tilde{g}^{i\ov{i}} | \partial_i (f_{V_1 V_1})|^2}{\lambda_1^2} \\
{} & + \frac{\rho'}{2}e^{B\psi} \sum_k \tilde{g}^{i\ov{i}} (| f_{ik}|^2+ | f_{i\ov{k}}|^2) + \rho''\tilde{g}^{i\overline{i}}|\partial_{i}(e^{B\psi} | \partial f |^2_g)|^{2} \\ {} &+ \left(A\delta-C\right) \sum_i \tilde{g}^{i\ov{i}}
 - An,
\end{split}
\end{equation}
which is the analog of Lemma \ref{lemmalbl1}.
Combining these and the rest of the arguments of Theorem \ref{thmap} (there is only a small change in the proof of the analog of Lemma \ref{lemmauno} due to the fact mentioned earlier that $\alpha+\ve\omega$ need not be K\"ahler, but this does not really affect the arguments), we obtain $\lambda_{1}\leq Ce^{-B\psi}$ at $x_{0}$, as required.
\end{proof}

It is now straightforward to complete the proof of Theorem \ref{thmenv}, given the results of Berman \cite{Be}.

\begin{proof}[Proof of Theorem \ref{thmenv}]
Since the class $[\alpha]+\ve[\omega]$ is K\"ahler for all $\ve>0$, it follows from \cite[Proposition 2.4]{Be} that for every fixed $\ve>0$ as we let $\beta\to+\infty$ we have
$$f_{\ve,\beta}\to u_\ve,$$
uniformly on $X$, where $u_\ve$ is the envelope given by
\begin{equation}\label{env2}
u_\ve(x)=\sup\{\vp(x)\ |\ \vp\in \textrm{PSH}(M,\alpha+\ve\omega), \vp\leq 0\}.
\end{equation}
From the definition it is clear that the functions $u_\ve$ are decreasing as $\ve$ decreases to zero, and it is easy to see that their pointwise decreasing limit is $u$ (see \cite[Lemma 5.2]{BEGZ}). Since Lemmas \ref{le} and \ref{le2} give uniform $C^{1,1}$ bounds for $f_{\ve,\beta}$ on compact subsets away from $E_{nK}(\alpha)$, it follows easily that $u$ is  $C^{1,1}_{\rm loc}(M\backslash E_{nK}(\alpha))$, as desired.

Lastly we prove \eqref{volform}, following the same lines as \cite{Be3,BD,To}. Away from $E_{nK}(\alpha)$ we can define the Monge-Amp\`ere operator $(\alpha+\ddbar u)^n$ (either in the sense of Bedford-Taylor, or pointwise a.e.), and it is classical (see e.g. \cite[Proposition 3.1]{Be3}) that this vanishes outside the contact set $\{u=0\}$, and so
$$\int_{M\backslash E_{nK}(\alpha)}(\alpha+\ddbar u)^n=\int_{(M\backslash E_{nK}(\alpha))\cap\{u=0\}}(\alpha+\ddbar u)^n.$$
 Since $u$ has minimal singularities, it follows from \cite{BEGZ} that
$$\int_M \langle (\alpha+\ddbar u)^n\rangle=\int_M \alpha^n,$$
where $\langle \cdot\rangle$ denotes the non-pluripolar Monge-Amp\`ere product. Since $u\in C^{1,1}_{\rm loc}(M\backslash E_{nK}(\alpha))$, it follows that the non-pluripolar product equals the extension by zero of $(\alpha+\ddbar u)^n$ from $M\backslash E_{nK}(\alpha)$ to $M$, and so we get
$$\int_{M\backslash E_{nK}(\alpha)}(\alpha+\ddbar u)^n=\int_M \alpha^n.$$
But since $u\in C^{1,1}_{\rm loc}(M\backslash E_{nK}(\alpha))$, a simple measure-theoretic argument (see e.g. \cite[p.2]{To}) shows that $\nabla^2 u=0$ a.e. on the set $(M\backslash E_{nK}(\alpha))\cap\{u=0\}$, so that on this set we have $\alpha+\ddbar u=\alpha$ a.e. (with respect to the Riemannian measure, and therefore also with respect to $(\alpha+\ddbar u)^n$ since this measure is absolutely continuous with respect to the Riemannian measure away from $E_{nK}(\alpha)$), which gives
$$\int_{(M\backslash E_{nK}(\alpha))\cap\{u=0\}}(\alpha+\ddbar u)^n=\int_{(M\backslash E_{nK}(\alpha))\cap\{u=0\}}\alpha^n=\int_{\{u=0\}}\alpha^n,$$
and putting these together we obtain \eqref{volform}.
\end{proof}
\begin{remark}
The proof of Theorem \ref{thmenv} (especially Lemma \ref{le2}) shows that the conclusion of the Main Theorem 1.2 of \cite{Be} is now improved to $C^{1,1}_{\rm loc}$ convergence on the complement of the non-K\"ahler locus.
\end{remark}

\end{document}